
\documentclass[nohypdvips]{siamart0216}

\usepackage[T1,T2A]{fontenc}
\usepackage[utf8]{inputenc}
\usepackage{amsmath}
\usepackage{amssymb}
\usepackage{graphicx}
\usepackage{algorithm}
\usepackage{algorithmicx}
\usepackage{algpseudocode}
\usepackage{hyperref}
\usepackage{microtype}

\newsiamremark{remark}{Remark}

\DeclareUnicodeCharacter{00A0}{ }

\newcommand{\abs}[1]{\mathopen| #1 \mathclose|}
\newcommand{\dx}{\mathrm d x}
\newcommand{\dw}{\mathrm d w}
\newcommand{\ddz}{\frac{\mathrm d}{\mathrm dz}}

\newcommand{\OO}{\mathcal{O}}
\newcommand{\OOtilde}{\widetilde{\mathcal{O}}}
\newcommand{\MM}{\mathcal{M}}

\begin{document}

\headers{Arbitrary-precision Gauss-Legendre quadrature}{Fredrik Johansson, Marc Mezzarobba}

\title{Fast and rigorous arbitrary-precision computation of Gauss-Legendre quadrature nodes and weights}
\author{Fredrik Johansson\thanks{INRIA -- LFANT, CNRS -- IMB -- UMR 5251, Université de Bordeaux, 33400 Talence, France (\email{fredrik.johansson@gmail.com})}
 \and
Marc Mezzarobba\thanks{Sorbonne Université,  CNRS, Laboratoire d'Informatique
de Paris 6, LIP6, F-75005 Paris, France (\email{marc@mezzarobba.net})}}
\maketitle

\begin{abstract}
We describe a strategy for rigorous
arbitrary-precision evaluation of Legendre polynomials on the unit
interval and its application in the generation of Gauss-Legendre
quadrature rules.
Our focus is on making the evaluation practical for a wide range of
realistic parameters, corresponding to the requirements of numerical
integration to an accuracy of about $100$ to $100\,000$ bits.
Our algorithm combines the summation by rectangular
splitting of several types of expansions in terms of hypergeometric
series with a fixed-point implementation of Bonnet's three-term
recurrence relation.
We then compute rigorous enclosures of the Gauss-Legendre nodes and
weights using the interval Newton method.
We provide rigorous error bounds for all steps of the algorithm.
The approach is validated by an implementation in the Arb library,
which achieves order-of-magnitude speedups over
previous code for computing Gauss-Legendre rules suitable for precisions
in the thousands of bits.
\end{abstract}

\begin{keywords}
Legendre polynomials, Gauss--Legendre quadrature, arbitrary-precision arithmetic, interval arithmetic
\end{keywords}

\begin{AMS}
65Y99, 65G99, 33C45
\end{AMS}

\section{Introduction}

The Legendre polynomials $P_n(x)$ are the sequence
of orthogonal polynomials with respect to the unit weight
on the interval $(-1,1)$, normalized so that $P_n(1) = 1$.
Like other classical orthogonal polynomials, Legendre polynomials
satisfy a three-term recurrence, in this case the relation
\begin{equation} \label{eq:recurrence}
  (n + 1) P_{n+1}(x) - (2n + 1) x P_n(x) + n P_{n-1}(x) = 0,
  \quad n \geq 1,
\end{equation}
also known as Bonnet's formula, and a second order differential equation, here
\begin{equation} \label{eq:diffeq}
  (1 - x^2) P''_n(x) - 2 x P'_n(x) + n (n+1) P_n(x) = 0.
\end{equation}

The definition implies that $P_n$ has $n$ roots all located in $(-1,1)$.
Perhaps the most important application of Legendre polynomials
is the Gauss-Legendre quadrature rule
\begin{equation}
\label{eq:glquad}
\int_{-1}^{1} f(x) \dx \approx \sum_{i=0}^{n-1} w_i f(x_i), \qquad
w_i = \frac{2}{(1-x_i^2) [P'_n(x_i)]^2},
\end{equation}
where the \emph{nodes} $x_i$ are the roots of $P_n$.
The quantity~$w_i$ is called the \emph{weight} associated
with the node~$x_i$.

For some applications in computer algebra, number theory,
mathematical physics, and experimental mathematics,
it is necessary to compute integrals to an accuracy of
hundreds of digits, and occasionally even tens of
thousands of
digits~\cite{bailey2011high,BaileyBorweinCrandall2006,Broadhurst2016,Molin2010a}.
The Gauss-Legendre formula~\cref{eq:glquad} achieves an accuracy of~$p$ bits using
$n = \OO(p)$ evaluation points if $f$ is analytic on a neighborhood
of $(-1,1)$, and if the neighborhood
is large (that is, if the path of integration is well isolated
from any singularities of~$f$), then
the constants hidden in the $\OO$
notation are close to the best achievable
by any quadrature rule~\cite{kowalski1985gauss}.
This quality is related to the fact that~\cref{eq:glquad}
maximizes the order of accuracy among $n$-point quadrature rules for integrating polynomials,
being exact when $f$ is any polynomial of degree up to $2n-1$;
as a result, the accuracy is also excellent for analytic integrands
that are well approximated by polynomials.%
\footnote{However, statements about the near-optimality of Gauss-Legendre
quadrature must not be over-interpreted.
Indeed, the rate of convergence of Gauss-Legendre quadrature is \emph{not} optimal asymptotically
when $n \to \infty$ for analytic $f$
on a fixed neighborhood, being improvable by a small factor~\cite{Trefethen2011}.}

In general, the error in~\cref{eq:glquad} can be bounded in terms of
$\sup_{x \in (-1,1)} |f^{(2n)}(x)|$, or, if $f$ is analytic on an
elliptical domain~$D$ with foci at~$\pm 1$, in terms of
$\sup_{z \in D} |f(z)|$ and the semi-axes of the ellipse.
Even when the conditions are not ideal for using \cref{eq:glquad} directly,
rapid convergence is often possible
by combining~\cref{eq:glquad}
with adaptive subdivision of the integration path~\cite{petras2002self}.
We give some elements of comparison between Gauss-Legendre quadrature and
alternative methods,  such as Clenshaw-Curtis quadrature, in
\cref{sec:vsothers}.

The Gauss-Legendre scheme has the drawback
that the quadrature nodes and weights are
somewhat inconvenient to compute.
Indeed, $P_n$ becomes highly oscillatory for
large $n$ and hence presents difficulties
for naive root-finding and polynomial evaluation methods.
The classical Golub-Welsch algorithm avoids accuracy problems
by formulating the task of computing the nodes as
finding the eigenvalues of a tridiagonal matrix~\cite{golub1969calculation},
but this approach is still too slow to be practical for large~$n$.

In the last decade, several authors have contributed to
the development of asymptotic methods
that permit computing any individual node and weight for arbitrarily large $n$
in $\OO(1)$ time, culminating in the 2014 work by
Bogaert~\cite{Bogaert2012,hale2013fast,bogaert2014iteration}.
For a review of this progress, see Townsend~\cite{townsend2015race}.
Of course, the ``$\OO(1)$'' bound assumes that a fixed
level of precision is used. In the prevailing literature
this generally means 53-bit IEEE 754 floating-point arithmetic.
In addition, the available $\OO(1)$ implementations rely in part
on heuristic error estimates without rigorously proved bounds.

The literature on arbitrary precision or rigorous evaluation
is comparatively limited.
Petras \cite{petras1999computation} gave explicit
bounds for the error $|x_k^{(i)} - x_k|$ when
the roots $x_k$ of the Legendre polynomial $P_n$
are approximated using Newton iteration
\begin{equation}
\label{eq:newton}
x^{(i+1)}_k = x^{(i)}_k - \frac{P_n(x^{(i)}_k)}{P'_n(x^{(i)}_k)}
\end{equation}
provided that the initial values $x^{(0)}_k$
are computed by a certain asymptotic formula.
However, Petras did not address the numerical evaluation of $P_n(x)$.
Fousse \cite{fousse2007accurate} discussed the rigorous implementation
of Gauss-Legendre quadrature
using generic polynomial root isolation methods together with
interval Newton iteration
for root refinement,
but did not study fast methods for large $n$.
Code for high-precision Gauss-Legendre quadrature rules can also be found
in packages such as Pari/GP~\cite{PARI2}
and ARPREC~\cite{bailey2002arprec}, but without error analysis
and without special techniques for large~$n$.

In the present article, we are interested in computing Gauss-Legendre
nodes and weights to precisions~$p$ significantly larger than machine
precision---typically in the hundreds to thousands of bits---, with
rigorous error bounds.
As mentioned earlier, certain applications require enclosing the values
of integrals to accuracies of this order,
and it is often reasonable to use quadrature rules of degree~$n$ that
grows roughly linearly with~$p$ for this purpose.
For example, Johansson and Blagouchine~\cite{Johansson2018a}
study the computation of Stieltjes constants to precisions
of hundreds of digits using complex integration, building among
other things on the work described in the present paper.

If we assume that the precision $p$ varies, basic arithmetic operations
are no longer constant-time.
It is well-known that addition, multiplication and division of $p$-bit
numbers (of bounded exponent) can be performed in
$\OOtilde(p)$ operations~\cite{BrentZimmermann2010},
where the notation $\OOtilde(\cdot)$ means that we neglect logarithmic
factors.
It is then clear that any node and weight
can be computed to $p$-bit accuracy in $\OOtilde(n p)$ time,
by performing $\OO(\log p)$ Newton iterations \cref{eq:newton}
from an appropriate initial value.
As a consequence, the full set of nodes and weights
for the degree-$n$ quadrature rule
can be computed in $\OOtilde(n^2 p)$ time.
For numerical integration of analytic functions
where we typically have $p \approx n$, a better (indeed, optimal) estimate
than the classical $\OOtilde(n^3)$ bound is possible.

\begin{theorem}
\label{thm:complexity}
If $p \sim \alpha \, n$ for some fixed~$\alpha$, then the Gauss-Legendre
nodes and weights of degree $n$ can be computed to $p$-bit accuracy in
$\OOtilde(n^2)$ (equivalently, $\OOtilde(p^2)$) bit operations.
\end{theorem}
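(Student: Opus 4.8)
The plan is to combine Newton iteration with precision doubling, so that the total cost collapses to that of a single evaluation round at the target precision, and then to perform that round for all $n$ nodes simultaneously rather than node by node. Concretely, I would first produce initial approximations $x_i^{(0)}$ to every node that are accurate to $\OO(\log n)$ bits, using a classical asymptotic formula for the zeros of $P_n$; since each costs $\OO(1)$ operations at precision $\OO(\log n)$, this startup stays within $\OOtilde(n^2)$. These approximations already lie in the region of quadratic convergence of \cref{eq:newton}, because the spacing between consecutive zeros is $\Omega(n^{-2})$. I would then refine all $n$ approximations in parallel through $\OO(\log p)$ Newton steps, doubling the working precision $q$ at each step from $\OO(\log n)$ up to $p$.

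The essential point is the cost of one refinement step. At precision $q$ we must evaluate $P_n$ and $P_n'$ at the $n$ current approximations to absolute accuracy $\approx 2^{-q}$. Done independently, each evaluation via \cref{eq:recurrence} costs $\OOtilde(nq)$, giving $\OOtilde(n^2 q)$ per step and the classical $\OOtilde(n^3)$ overall. Instead I would compute the $\OO(n)$ floating-point coefficients of $P_n$ (and hence of $P_n'$) to precision $q$---which is $\OOtilde(nq)$ from the explicit hypergeometric coefficients---and then evaluate both polynomials at all $n$ points at once by fast multipoint evaluation on a subproduct--remainder tree. This performs the $n$ simultaneous evaluations in $\OOtilde(n)$ arithmetic operations, that is, $\OOtilde(nq)$ bit operations. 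Summing over the doubling schedule, the last step with $q \asymp p$ dominates and the entire refinement costs $\OOtilde(np) = \OOtilde(n^2)$. The weights follow for free: once $P_n'(x_i)$ is available at precision $p$ from the final step, each $w_i$ in \cref{eq:glquad} is one more $\OOtilde(p)$-bit computation, adding only $\OOtilde(n^2)$.

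The hard part will be making the simultaneous evaluation both fast and rigorous. Fast multipoint evaluation of $P_n$ is numerically delicate: the intermediate remainders in the tree can have a much larger dynamic range than the inputs, and evaluating $P_n$ from its monomial coefficients near a root suffers catastrophic cancellation, since the value $\approx 2^{-q/2}$ is formed from terms of size up to $2^{\OO(n)}$. I would therefore run the tree in ball arithmetic with a guard allowance of $\OO(n)$ bits---which, since $p \sim \alpha n$, leaves the working precision at $\OO(p)$ and the cost at $\OOtilde(n^2)$---and propagate explicit error bounds up and down the tree to certify that the accumulated rounding stays below the $2^{-q}$ target. An alternative that avoids analysing the stability of the remainder tree, and is closer to the practical algorithm of this paper, is to refine each node by a local Taylor expansion of $P_n$ generated from \cref{eq:diffeq}: because the Newton step shrinks like $2^{-q/2}$, only $\OO(1)$ expansion terms are needed once $q \gg \log n$. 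The obstacle there is the seed, since generating the expansion requires $P_n$ and $P_n'$ at the centre to full precision $q$, and chaining these values across steps incurs a precision deficit that ultimately forces one genuine precision-$p$ evaluation per node. Supplying those $n$ seeds within the $\OOtilde(n^2)$ budget again reduces to a single simultaneous evaluation, so the multipoint step---or an equivalent shared computation---is the ingredient I expect to be decisive for reaching the optimal exponent.
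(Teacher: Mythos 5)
Your proposal is correct and follows essentially the same route as the paper's own proof sketch: asymptotic initial values, simultaneous Newton refinement of all roots via fast multipoint evaluation at cost $\OOtilde(np)$ per round, and $\OO(n)$ guard bits to absorb the instability of the remainder tree, which is harmless precisely because $p \sim \alpha n$. Your added discussion of precision doubling and of the cancellation issue near the roots only fleshes out details the paper leaves as ``a technical exercise.''
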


\begin{proof}[Proof sketch]
Using the formulae in~\cite{petras1999computation},
we can compute good initial values for Newton iteration
in $\OOtilde(n)$ bit operations.
The Newton iteration can be performed for all roots
simultaneously using fast multipoint evaluation, which costs
$\OOtilde(n p)$ bit operations.
Fast multipoint evaluation is numerically unstable and
generically loses $\OO(n)$ bits of accuracy, but we can compensate for
this loss by using $\OO(n)$ guard bits~\cite{kobel2013fast}.
Since $p \sim \alpha n$ by assumption, this does not change the
complexity bound.
\end{proof}

Completing the details
of the proof is a technical exercise.
Despite being elegant in theory,
the algorithm behind \cref{thm:complexity} has a high overhead
in practice, in large part due to the need to work
with greatly increased precision.
Working with expanded polynomials and
processing all roots simultaneously also results in high memory usage and
makes parallelization difficult.
As discussed in \cref{sec:bitburst} below, an approach based on the
``bit-burst'' evaluation method for hypergeometric series
leads to a similar complexity bound and may prove more practical
for extremely large~$p$, but likely not for~$p \leq 10^6$.
We can achieve a slightly worse but still subcubic
complexity of $\OOtilde(n^{5/2})$
by employing
fast multipoint evaluation in a completely different way
to compute $P_n$ values in isolation~\cite{Johansson2014rectangular},
but unfortunately that algorithm also has high overhead.

In this work, we develop rigorous and more practical alternatives to the
asymptotically fast algorithm outlined above.
Our main contribution is to give a complete evaluation strategy
for Legendre polynomials on $[-1,1]$ in ball arithmetic~\cite{vdH:ball,Johansson2017arb}.
Computing the Gauss-Legendre nodes, then,
is a relatively simple application of the results
in~\cite{petras1999computation} together with
the interval Newton method~\cite{moore1979methods}.
For generating Gauss-Legendre quadrature rules with $n \sim \alpha p$,
our algorithm has an asymptotic bit complexity of $\OOtilde(n^3)$ like
classical methods, but much lower overhead.
For parameters $p, n \leq 10^5$ which are most relevant to
applications, the observed running time is effectively subcubic.
Our algorithm outperforms that of \cref{thm:complexity} for
practically any combination of $n$~and~$p$ in that range.
Furthermore, if $p = \OO(1)$, the complexity reduces to $\OOtilde(n)$
as in the machine-precision implementations by
Bogaert~\cite{bogaert2014iteration} and others.

The remainder of this paper is organized as follows.
\Cref{sec:general} gives an overview of our algorithm for evaluating
Legendre polynomials.
This is a hybrid algorithm that switches between different methods,
detailed in the following sections, depending on the values of $n$, $p$,
and~$x$.
In \cref{sec:recurrence}, we prove practical error bounds for
the three-term recurrence~\cref{eq:recurrence}, which can be
efficiently implemented in fixed-point arithmetic.
This method is ideal for $n$ and~$p$ up to a few hundred.
For larger $n$ or $p$, we use a fast method for evaluation
of hypergeometric series.
\Cref{sec:series} discusses the hypergeometric series
expansions that are preferable for different inputs and precision
(including a well-known asymptotic expansion for large $n$),
and \cref{sec:eval} their efficient evaluation.
In \cref{sec:selection}, we propose a strategy to select the
best formula for any combination of~$n, p, x$.
\Cref{sec:bench} presents benchmark results that compare the performance
of our algorithm to some previous implementations as well as the
asymptotically fast algorithm in \cref{thm:complexity}.
Finally, \cref{sec:vsothers} reviews the viability of Gauss-Legendre
quadrature compared to other methods
for extremely high precision integration.

Our code for evaluating Legendre polynomials and computing
Gauss-Legendre nodes and weights is freely available as part of
the Arb library \cite{Johansson2017arb}%
\footnote{\url{http://arblib.org/}}.

\section{General strategy}

\label{sec:general}

We work in the framework of midpoint-radius interval arithmetic,
also called ball arithmetic~\cite{vdH:ball,Johansson2017arb}.
In general, given an integer~$n$ and a ball $x = [m \pm r] = [m-r, m+r]$,
we want to evaluate $P_n(x)$ at~$x$,
yielding an enclosure $y = [m' \pm r']$ such that $P_n(\xi) \in y$
holds for all $\xi \in x$.

We restrict our attention to real~$x \in [-1, 1]$,
which is the most interesting part of the domain for applications.
Since $P_n(-x) = (-1)^n P_n(x)$, we can further
restrict to $0 \le x \le 1$.
Bogaert~\cite{bogaert2014iteration} suggests
working with $P_n(\cos(\theta))$ instead of $P_n(x)$
directly to improve numerical stability for $x$ close to~$\pm 1$.
This is not necessary in arbitrary-precision arithmetic since
a slight precision increase (of the order of $\OO(\log n)$ bits, since
the distance between two successive roots of~$P_n$ close to~$\pm1$ is
about $1/n^2$) works as well.

We note that, for rigorous evaluation of $P_n(z)$ with complex $z$
as well as Legendre functions of non-integer order $n$,
generic methods for the hypergeometric ${}_2F_1$ function
are applicable if $n$ is not extremely large; see~\cite{johansson2016hypergeometric}.
Real $|x| > 1$ can also be handled easily using naive methods.

In view of the use of Newton's method to compute the roots,
we also need to evaluate the derivative $P'_n(x)$,
typically at the same time as $P_n(x)$ itself.
A simple option is to deduce $P'_n(x)$ from
$P_n(x)$ and $P_{n-1}(x)$ using
\begin{equation} \label{eq:mixed}
  (x^2 - 1) P'_n(x) = n \bigl( x P_n(x) - P_{n-1}(x) \bigl).
\end{equation}
When $x$ is close to~$\pm 1$, though, this formula involves a
cancellation of about $\abs{\log_2(1 - x)}$ bits in the subtraction,
followed by a division by $x^2 - 1$.
Therefore, a direct evaluation of $P'_n(x)$ may be preferable to reduce
the working precision.
We use either of these strategies depending on the values of
$n$, $p$, and~$x$.

Our evaluation algorithms rely on ball arithmetic internally to
propagate the error bounds up to the final result.
Therefore, to ensure that the enclosure output by our evaluation
algorithm contains the image of the input, it is enough to have bounds
on the truncation errors associated to each of the approximate
expressions of~$P_n$ that we use.
The corresponding bounds are stated in equations \cref{eq:truncerr0},
\cref{eq:truncerr1}, and \cref{eq:truncerr2}.

To limit overestimation and computational overhead, we deviate from the
direct use of ball arithmetic on two occasions.
First, \cref{alg:gmprec} does not keep track of round-off
errors internally: instead, we prove an a priori bound on the
accumulated error (\cref{cor:bound-gmprec}) and add it to the
radius of the output ball after calling that algorithm.
Second, since some methods would produce unsatisfactorily large
enclosures when executed on input balls $x = [m \pm r]$ of radius
$r > 0$, we evaluate $P_n(m)$ (with higher internal precision if
necessary) and $P'_n(x)$ and use a first-order bound
\[ \max_{\xi \in x} |P_n(\xi) - P_n(m)|
   \le r \max_{\xi \in x} |P'_n(\xi)| \]
to separately bound the propagated error.
Similarly, we use a bound for $P''_n$ to compute a reasonably
tight enclosure for $P'_n([m \pm r])$.
Suitable bounds are given in \cref{prop:prop-bound}.
Denote by $[z^n] f(z)$ the coefficient of index $n$ in a power
series $f(z)$, and write $f(z) \ll_z \hat f(z)$
for two power series $f, \hat f$ if
$\hat f$ has nonnegative coefficients and
$| [z^n] f(z) | \leq [z^n] \hat f(z)$ for all $n$.

\begin{lemma} \label{lemma:majorants}
If $f$, $g$, $\hat f$, $\hat g$ are such that
$f(z) \ll_z \hat f(z)$ and $g(z) \ll_z \hat g(z)$, then
$\int_0^z f \ll_z \int_0^z \hat f$ and
$f(z)g(z) \ll_z \hat f(z) \hat g(z)$.
\end{lemma}

\begin{proposition} \label{prop:prop-bound}
The following bounds hold for $-1 \leq x \leq 1$:
\begin{align}
\label{eq:prop-bound}
  |P_n'(x)| &\le \min\left(
      \frac{2^{3/2}}{\sqrt{\pi}} \frac{\sqrt n}{(1-x^2)^{3/4}},
      \frac{n(n+1)}{2}
  \right), \\
\label{eq:prop-bound1}
  |P_n''(x)|
  &\le \min\left(
      \frac{2^{5/2}}{\sqrt{\pi}} \frac{n^{3/2}}{(1-x^2)^{5/4}},
      \frac{(n-1) n (n+1) (n+2)}{8}
  \right).
\end{align}
\end{proposition}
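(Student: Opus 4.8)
The plan is to prove each of \cref{eq:prop-bound} and \cref{eq:prop-bound1} by establishing the two entries of the $\min$ separately: the right-hand ``polynomial'' entry, which is finite and uniform in~$x$, and the left-hand ``asymptotic'' entry, which blows up as $x\to\pm1$. Because $P_n^{(k)}(-x)=(-1)^{n-k}P_n^{(k)}(x)$, it suffices to work on $[0,1]$, and because $P_n^{(k)}$ is a polynomial the few indices with $n\le k$ can be checked by hand. I would obtain both families from a single device: a Sonin-type monotone energy attached to the second-order equation governing the relevant function.

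For the polynomial entries, differentiate \cref{eq:diffeq} exactly $k$ times to see that $y=P_n^{(k)}$ solves $(1-x^2)y''-2(k+1)x\,y'+\lambda y=0$ with $\lambda=n(n+1)-k(k+1)=(n-k)(n+k+1)>0$. Then set $F(x)=y(x)^2+\frac{1-x^2}{\lambda}\,y'(x)^2$; substituting the equation collapses the cross terms and leaves $F'(x)=\frac{2(2k+1)x}{\lambda}\,y'(x)^2\ge0$ on $[0,1]$. Hence $F$ is nondecreasing, so $y(x)^2\le F(x)\le F(1)=y(1)^2$, i.e. $\abs{P_n^{(k)}(x)}\le P_n^{(k)}(1)=\frac{(n+k)!}{2^k\,k!\,(n-k)!}$. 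This value equals $\frac{n(n+1)}{2}$ for $k=1$ and $\frac{(n-1)n(n+1)(n+2)}{8}$ for $k=2$, which are exactly the right-hand entries.

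For the asymptotic entries I would pass to $x=\cos\theta$ and use the Liouville normal form: $v(\theta)=(\sin\theta)^{1/2}P_n(\cos\theta)$ satisfies $v''+Qv=0$ with $Q(\theta)=(n+\tfrac12)^2+\frac{1}{4\sin^2\theta}$. Since $Q$ is decreasing on $(0,\pi/2]$, the energy $E=v^2+(v')^2/Q$ obeys $E'=-\frac{Q'}{Q^2}(v')^2\ge0$, so $E(\theta)\le E(\pi/2)=P_n(0)^2+\frac{P_n'(0)^2}{(n+1/2)^2+1/4}$; a Wallis-type estimate of the central values yields $E(\pi/2)\le\frac{2}{\pi n}$. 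Expressing $P_n'(\cos\theta)$ in terms of $v,v'$ and $P_n''(\cos\theta)$ in terms of $v,v'$ and $v''=-Qv$, and inserting $\abs v\le\sqrt{E(\pi/2)}$, $\abs{v'}\le\sqrt{Q\,E(\pi/2)}$, $\abs{v''}\le Q\sqrt{E(\pi/2)}$ together with $\sqrt Q\le(n+\tfrac12)+\frac{1}{2\sin\theta}$, one finds
\[
 \abs{P_n'(\cos\theta)}\le\sqrt{E(\pi/2)}\Bigl[(n+\tfrac12)(\sin\theta)^{-3/2}+(\sin\theta)^{-5/2}\Bigr],
\]
and an analogous estimate with leading factor $(n+\tfrac12)^2(\sin\theta)^{-5/2}$ for $P_n''$. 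Using $\sin\theta=(1-x^2)^{1/2}$, the leading terms reproduce $\frac{2^{3/2}}{\sqrt\pi}\sqrt n\,(1-x^2)^{-3/4}$ and $\frac{2^{5/2}}{\sqrt\pi}n^{3/2}\,(1-x^2)^{-5/4}$, in fact with a factor of two (respectively four) to spare.

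The real work, and the only genuine obstacle, is the constant bookkeeping: the change of variables generates subleading terms in higher powers of $(\sin\theta)^{-1}$, so the clean asymptotic inequality is immediate only away from the endpoints. To close the argument I would fix a threshold $\sin\theta\ge\frac{1}{n-1/2}$; the slack noted above is exactly enough to absorb the subleading terms there, while for $\sin\theta<\frac{1}{n-1/2}$ the asymptotic entry already exceeds the polynomial entry $P_n^{(k)}(1)\ge\abs{P_n^{(k)}(x)}$, so the inequality holds for free. The two ranges cover $[0,1]$, legitimising the $\min$. Note that \cref{lemma:majorants} plays no role in this argument; it is the explicit verification of $E(\pi/2)\le\frac{2}{\pi n}$ for both parities of $n$, and the tracking of the lower-order tails, that require care.
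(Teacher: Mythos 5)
Your proof is essentially correct, but it follows a genuinely different route from the paper. The paper works with the generating function $F(x,z)=(1-2xz+z^2)^{-1/2}$: it imports Bernstein's inequality for $|P_n(x)|$ from the literature, converts it into a majorant $F(x,z)\ll_z \sqrt2\,(\sin\theta)^{-1/2}(1-z)^{-1/2}$, and multiplies majorants via \cref{lemma:majorants} to bound the coefficients of $\partial F/\partial x$ and $\partial^2F/\partial x^2$; the polynomial entries come from the cruder majorants $(1-z)^{-3}$ and $3z^2(1-z)^{-5}$, whose $n$-th coefficients are exactly $\tfrac{n(n+1)}{2}$ and $\tfrac{(n-1)n(n+1)(n+2)}{8}$. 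This gives bounds valid for all $x$ in one stroke, with no case split, at the price of relying on an external inequality and the majorant calculus. Your argument is instead purely ODE-based and self-contained: the Sonin energy $y^2+\tfrac{1-x^2}{\lambda}(y')^2$ for $y=P_n^{(k)}$ is a clean and correct derivation of $\abs{P_n^{(k)}(x)}\le P_n^{(k)}(1)$ (your identities $\lambda=(n-k)(n+k+1)$, $F'=\tfrac{2(2k+1)x}{\lambda}(y')^2$ and $P_n^{(k)}(1)=\tfrac{(n+k)!}{2^k k!(n-k)!}$ all check out), and the Liouville-form energy with $Q=(n+\tfrac12)^2+\tfrac14\sin^{-2}\theta$ re-derives a Bernstein-type bound rather than citing one. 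The cost is the endpoint bookkeeping you acknowledge, and there is one quantitative claim that does not survive scrutiny as stated: the single threshold $\sin\theta\ge\tfrac{1}{n-1/2}$ is exactly calibrated for $P_n'$ (it reduces to $(n+\tfrac12)+\sin^{-1}\theta\le 2n$), but for $P_n''$ the term-by-term subleading contributions at that threshold add up to about $\tfrac{11}{2}n^2$ against a budget of $4n^2$, so the factor-of-four slack is \emph{not} enough there. The repair is immediate and stays within your scheme: for $k=2$ push the threshold up to the actual crossover of the two entries of the $\min$, which occurs near $\sin\theta\approx 3.7/n$; below it the polynomial entry is the smaller one and the bound holds for free, while above it $\sin^{-1}\theta\lesssim 0.28\,n$ and the subleading terms fit comfortably within the slack. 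With that adjustment (and the sharp Wallis bound $\binom{2m}{m}4^{-m}\le(\pi(m+\tfrac14))^{-1/2}$, which is needed to get $E(\pi/2)\le\tfrac{2}{\pi n}$ for odd $n$), your proof closes.
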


\begin{proof}
It is classical that Legendre polynomials are given by the generating series
\begin{equation} \label{eq:generating-series}
  F(x, z)
  = \sum_{n=0}^{\infty} P_n(x) z^n
  = \frac{1}{\sqrt{1 - 2 x z + z^2}}.
\end{equation}
Differentiation with respect to~$x$ yields
\[
  \sum_{n=0}^{\infty} P'_n(x) z^n
  = \frac{z F(x, z)}{1 - 2 x z + z^2},
  \qquad
  \sum_{n=0}^{\infty} P''_n(x) z^n
  = \frac{3 z^2 F(x, z)}{(1 - 2 x z + z^2)^2}.
\]
Set $\theta = \arccos x$, so that the roots of $1 - 2 x z + z^2$
are $e^{\pm i \theta}$.
Then, in the notation of \cref{lemma:majorants}, we have the bound
\[ \frac{1}{1 - 2 x z + z^2}
   = \frac{1}{2 i \sin \theta} \left( \frac{1}{z - e^{i \theta}} -
                                      \frac{1}{z - e^{-i \theta}} \right)
   = \sum_{n=0}^{\infty} \frac{\sin\bigl((n+1) \theta\bigr)}{\sin(\theta)} z^n
   \ll_z \frac{\sin(\theta)^{-1}}{1 - z}. \]
In addition, Bernstein's inequality for the Legendre polynomials
\cite{AntonovHolsevnikov1980}
(see also~\cite{ChowGatteschiWong1994})
combined with the logarithmic convexity of the Gamma function yields
\[ |P_n(x)| \leq \frac{\sqrt{2}}{\sqrt{\pi}} \frac{1}{\sqrt{\sin \theta}}
                 \frac{1}{\sqrt{n+1/2}}
            \leq \frac{\sqrt 2}{\sqrt{\pi}} \frac{1}{\sqrt{\sin \theta}}
                 \frac{\Gamma(n+1/2)}{\Gamma(n+1)},
\]
and hence
\[ F(x, z) \ll_z \frac{\sqrt{2}}{\sqrt{\sin \theta}}
               \sum_{n=0}^{\infty} \frac{1}{\sqrt{\pi}}
                                  \frac{\Gamma(n+1/2)}{\Gamma(n+1)} z^n
            = \frac{\sqrt{2}}{\sqrt{\sin \theta}}
              \frac{1}{\sqrt{1-z}}. \]
By \cref{lemma:majorants}, these bounds combine into
\[ \frac{\mathrm d F}{\mathrm d x}
   \ll_z \frac{\sqrt{2}}{\sin(\theta)^{3/2}} \frac{1}{(1-z)^{3/2}},
   \qquad
   \frac{\mathrm d^2 F}{\mathrm d x^2}
   \ll_z \frac{\sqrt{2}}{\sin(\theta)^{5/2}} \frac{3 z^2}{(1-z)^{5/2}}. \]
Since
$[z^n] (1 - z)^{- k - 1/2} = \Gamma(n + 1/2)/(\Gamma(k+1/2) \Gamma(n - k + 1))$
and using again the logarithmic convexity of~$\Gamma$, we conclude
that
\[ |P'_n(x)| \leq \frac{\sqrt{2}}{\sin(\theta)^{3/2}}
                  \frac{2}{\sqrt{\pi}} \frac{\Gamma(n+1/2)}{\Gamma(n)}
             \leq \frac{2^{3/2}}{\sqrt{\pi}} \frac{\sqrt n}{\sin(\theta)^{3/2}},
   \qquad
   |P''_n(x)| \leq \frac{2^{5/2}}{\sqrt{\pi}} \frac{n^{3/2}}{(1-x^2)^{5/4}}.
\]
The result follows since all derivatives of Legendre polynomials reach
their maximum at~$x=1$ (or by using the bounds
$(z\!-\!e^{\pm i \theta})^{-1}, F(x, z) \ll_z (1\!-\!z)^{-1}$
and \cref{lemma:majorants}).
\end{proof}

\begin{remark}
By the same reasoning, the inequality
\[ |P^{(k)}_n(x)| \leq \frac{2^{k+1/2}}{\sqrt{\pi}}
                       \frac{n^{k-1/2}}{(1 - x^2)^{(2n+1)/4}} \]
actually holds for all~$k$. Unfortunately, it seems to
overestimate the envelope of~$|P_n^{(k)}|$ by a factor about~$2^k$
in the region where it is smaller than~$P_n^{(k)}(1)$.
\end{remark}

Based on these reductions, we assume from now on that $x$~is a
floating-point number with $0 \leq x \leq 1$.
Our main algorithm for evaluating~$P_n$ at~$x$ combines the following
methods:
\begin{itemize}
  \item the iterative computation of~$P_n(x)$
  via the three-term recurrence~\cref{eq:recurrence},
  \item the asymptotic expansion~\cref{eq:asymptotic}
  of $P_n(x)$ as~$n \to \infty$,
  \item the usual expanded expression \cref{eq:pzeroseries0},
  \cref{eq:pzeroseries1} of $P_n$ in the monomial basis,
  \item the analogous terminating expansion \cref{eq:poneseries}
  at~$1$.
\end{itemize}
All three expansions can be written as hypergeometric
series, i.e., sums of the form $\sum_k c_k \xi^k$ where $c_k/c_{k-1}$
is a rational function of~$k$.

The constraints and heuristics used to select between these methods
are described in detail below.
Roughly speaking,
the three-term recurrence is used for small index~$n$ and precision~$p$, when~$x$
is not too close to~$1$;
the asymptotic series when $n$~is large enough, again with $x$~not too
close to~$1$;
the expansion at~$0$ for large $p$ unless $x$ is close to~$1$;
and finally the expansion at~$1$ in the remaining cases
when $x$ is close to~$1$.

For an evaluation at $p$-bit precision, we choose parameters such as
truncation orders and internal working precision to target an absolute
error of $2^{-p-p'}$ for some $p' = \OO(\log n)$, corresponding to
a relative error of about $2^{-p}$ measured with respect
to monotone envelopes for $P_n(x)$ and $P'_n(x)$ as in~\cite{Bogaert2012}.
The relative error of a computed ball for $P_n(x)$ where $x$ is near a zero $x_k$
can be arbitrarily large, but the relative error of $P'_n(x)$ near $x_k$
will be small, which is sufficient for the Newton iteration method.
Since the output consists of a ball, we also have the option of catching
a result with large relative error and repeating the evaluation with a
higher precision as needed.

\section{Basecase recurrence}

\label{sec:recurrence}

For small $n$, a straightforward way to compute~$P_n(x)$ is to apply
the three-term recurrence \cref{eq:recurrence},
starting from $P_0(x)=1$ and ${P_1(x) = x}$.
Computing~$P_n(x)$ by this method takes about $(\MM(t) + \OO(t))\, n$
bit operations, where $t$ is the working precision and $\MM(t)$ denotes the
cost of $t$-bit multiplication.
It is thus attractive for small $n$ and $t$, especially when both
$P_n(x)$ and $P'_n(x)$ are needed, since we can get $P_{n-1}(x)$ at no
additional cost.

Fix $x \in [-1, 1]$, and let $p_n = P_n(x)$.
Bonnet's formula~\cref{eq:recurrence} gives
\begin{equation} \label{eq:rec-bis}
  p_{n + 1} =
    \frac{1}{n+1}
    \bigl( (2n +1) x p_n - n p_{n-1} \bigr),
  \qquad n \geq 0.
\end{equation}
In a direct implementation of this recurrence in ball arithmetic, the
width of the enclosures would roughly double at every iteration,
requiring to increase the internal working precision by $\OO(n)$ bits.
We avoid this issue by performing an a priori round-off error
analysis (to be presented now) of the evaluation that yields a less
pessimistic bound on the accumulated error.
Additionally, the static error bound allows us to implement the
recurrence in fixed-point arithmetic, avoiding the overhead of
floating-point and interval operations.

Suppose $x = \hat x \, 2^{-t}$ with $\hat x \in \mathbb Z$
is a given fixed-point number.
Let $\lceil u \rfloor$ denote the integer truncation of a real
number~$u$ (note that this is not the same thing as rounding to the nearest
integer; however, any rounding function would do).
The integer sequence $(\hat p_n)$ defined by
\begin{equation} \label{eq:rec-fxpt}
  \hat{p}_0 = 2^t, \qquad
  \hat{p}_1 = \hat x, \qquad
  \hat{p}_{n + 1} =
    \left\lceil \frac{1}{n + 1}  \bigl( (2 n + 1) \lceil \hat{x}
      \hat{p}_n 2^{- t} \rfloor - n \hat{p}_{n - 1} \bigr)
    \right\rfloor
\end{equation}
is easy to compute using only integer arithmetic, and
$\hat p_n \, 2^{-t}$ is an approximation of~$p_n$.
\Cref{alg:gmprec} provides a complete C implementation
using GMP~\cite{granlund2017}.
As a small optimization, we delay the division by~$n+1$ until we have
accumulated a denominator of the size of a machine word.

\begin{algorithm}[h]
  \caption{Evaluation of Legendre polynomials in GMP fixed-point arithmetic}
  \small
  \label{alg:gmprec}
  \begin{algorithmic}[1]
    \Require An integer $x$ and $t \ge 0$ such that $|2^{-t} x| \le 1$, and $n \ge 1$
    \Ensure $p, q$ such that $|2^{-t} p - P_{n-1}(2^{-t} x)|, |2^{-t} q - P_{n}(2^{-t} x)| \le (0.75 \, (n+1)(n+2) + 1) \, 2^{-t}$
\State \verb!void legendre(mpz_t p, mpz_t q, int n, const mpz_t x, int t) {!
\State \verb!    mpz_t tmp; int k; mpz_init(tmp);!
  \Comment{Comments use the notation of}
\State \verb!    mp_limb_t denlo, den = 1;!
  \Comment{\rlap{the proof of \cref{cor:bound-gmprec}}%
           \phantom{Comments use the notation of}}
\State \verb!    mpz_set_ui(p, 1); mpz_mul_2exp(p, p, t);!
  \Comment{$\mathtt p_0 = 2^t$}
\State \verb!    mpz_set(q, x);!
  \Comment{$\mathtt q_0 = \hat x$}
\State \verb!    for (k = 1; k < n; k++) {!
\State \verb!        mpz_mul(tmp, q, x); mpz_tdiv_q_2exp(tmp, tmp, t);!
  \Comment{$\lceil \hat x \, \mathtt q_{k-1} \, 2^{-t} \rfloor$}
\State \verb!        mpz_mul_si(p, p, -k*k)!
\State \verb!        mpz_addmul_ui(p, tmp, 2*k+1);!
  \Comment{$- k^2 \mathtt p_{k-1} + (2k+1) \, \mathtt{tmp}$}
\State \verb!        mpz_swap(p, q);!
\State \verb!        if (mpn_mul_1(&denlo, &den, 1, k+1)) {!
  \Comment{If multiplication overflows}
\State \verb!            mpz_tdiv_q_ui(p, p, den);!
  \Comment{$\lceil \mathtt p / \mathtt d_{k-1} \rfloor$}
\State \verb!            mpz_tdiv_q_ui(q, q, den);!
\State \verb!            den = k+1;!
  \Comment{$\mathtt d_{k} = k+1$}
\State \verb!        } else den = denlo;!
  \Comment{$\mathtt d_{k} = (k+1) \, \mathtt d_{k-1}$}
\State \verb!    }!
\State \verb!    mpz_tdiv_q_ui(p, p, den/n); mpz_tdiv_q_ui(q, q, den);!
\State \verb!    mpz_clear(tmp);!
\State \verb!}!
  \end{algorithmic}
\end{algorithm}

To bound the difference $\abs{\hat p_n \, 2^{-t} - p_n}$, we analyze
the effect on the result of a small perturbation in each iteration
of \cref{eq:rec-bis}.
The bound is based on a classical linearity argument (compare, e.g.,
\cite{Wimp1984}) combined with majorant series techniques.

\begin{proposition} \label{prop:rec-error}
Suppose that a real sequence $(\tilde p_n)_{n \geq -1}$ satisfies
$\tilde p_0 = 1$ and
\begin{equation} \label{eq:rec-pert}
  \tilde{p}_{n + 1} =
    \frac{1}{n+1}
    \bigl( (2n +1) x \tilde{p}_n - n \tilde{p}_{n-1} \bigr)
    + \varepsilon_n,
  \qquad n \geq 0.
\end{equation}
for arbitrary real numbers $\varepsilon_n$
with $\abs{\varepsilon_n} \leq \bar\varepsilon$ for all $n$.
Then we have
\[
  \abs{\tilde p_n  - P_n(x)}
  \leq \frac{(n+1)(n+2)}{4} \bar \varepsilon
\]
for all $n \geq 0$.
\end{proposition}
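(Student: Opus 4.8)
The plan is to track the propagated error $e_n := \tilde p_n - P_n(x)$ by a generating-function computation and then bound its coefficients using the majorant-series technique of \cref{lemma:majorants}. Since $P_n(x)$ satisfies the unperturbed form of \cref{eq:rec-pert} for every $n \ge 0$ (the $n=0$ instance reduces to $P_1 = x P_0$ and does not involve $P_{-1}$), subtracting the two recurrences gives
\[
  e_{n+1} = \frac{1}{n+1}\bigl((2n+1)x\,e_n - n\,e_{n-1}\bigr) + \varepsilon_n,
  \qquad e_0 = 0,
\]
so $e_n$ is the solution of the \emph{homogeneous} Legendre recurrence driven by the source $(\varepsilon_n)$. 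The value of $e_{-1}$ is irrelevant because its coefficient vanishes at $n=0$.

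Next I would pass to the ordinary generating function $E(z) = \sum_{n\ge0} e_n z^n$. Multiplying the recurrence by $z^n$, summing over $n \ge 0$, and using $e_0 = 0$ to identify $\sum_{n\ge0}(n+1)e_{n+1}z^n = E'(z)$ converts the three-term recurrence into the first-order linear ODE
\[
  (1 - 2xz + z^2)\,E'(z) - (x - z)\,E(z) = G(z),
  \qquad G(z) := \sum_{n\ge0}(n+1)\varepsilon_n z^n .
\]
The homogeneous part is solved by the Legendre generating series $F(x,z) = (1 - 2xz + z^2)^{-1/2}$ of \cref{eq:generating-series}, which satisfies $(1-2xz+z^2)F' = (x-z)F$. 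Variation of constants, together with the identity $(1-2xz+z^2)F = F^{-1}$, then yields the closed form
\[
  E(z) = F(x,z)\int_0^z G(w)\,F(x,w)\,\mathrm d w ,
\]
the constant of integration being fixed by $E(0) = e_0 = 0$. I regard assembling this ODE and its variation-of-constants solution correctly as the crux of the argument; everything afterwards is mechanical.

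Finally I would feed this formula into the majorant calculus. The classical bound $|P_n(x)| \le 1$ on $[-1,1]$ gives $F(x,z) \ll_z (1-z)^{-1}$, while $|\varepsilon_n| \le \bar\varepsilon$ gives $G(z) \ll_z \bar\varepsilon\,(1-z)^{-2}$. By \cref{lemma:majorants} (product and termwise integration of majorants) we obtain $G(w)F(x,w) \ll_w \bar\varepsilon\,(1-w)^{-3}$, hence $\int_0^z G(w)F(x,w)\,\mathrm d w \ll_z \tfrac{\bar\varepsilon}{2}(1-z)^{-2}$ (the formal integral of $(1-w)^{-3}$ merely lowers the constant term), and one further product gives
\[
  E(z) \ll_z \frac{\bar\varepsilon}{2}\,(1-z)^{-3}.
\]
Reading off coefficients with $[z^n](1-z)^{-3} = (n+1)(n+2)/2$ then produces $|e_n| \le \tfrac{(n+1)(n+2)}{4}\bar\varepsilon$, as claimed. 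The only subtlety worth checking is that the crude uniform estimate $|P_n|\le 1$, rather than the sharper $\theta$-dependent majorant used in \cref{prop:prop-bound}, is precisely what yields the clean factor $(1-z)^{-1}$ and hence the stated quadratic growth; a sharper input would complicate the coefficient extraction without improving the final constant.
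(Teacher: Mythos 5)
Your proposal is correct and follows essentially the same route as the paper's proof: subtract the exact recurrence to get a driven Legendre recurrence for the error, convert it to the first-order ODE $(1-2xz+z^2)E'=(x-z)E+G$, solve by variation of constants using $(1-2xz+z^2)F^2=1$ to obtain $E=F\int_0^z GF$, and then apply \cref{lemma:majorants} with $F \ll_z (1-z)^{-1}$ and $G \ll_z \bar\varepsilon(1-z)^{-2}$ to read off the coefficient bound $\tfrac{(n+1)(n+2)}{4}\bar\varepsilon$. The only (immaterial) difference is that the paper extends the sequences by zero to all $n\in\mathbb{Z}$ before passing to generating series, whereas you sum directly over $n\ge 0$.
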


\begin{proof}
Let
$\delta_n = \tilde{p}_n - p_n$
and
$\eta_n = (n + 1) \varepsilon_n$.
Subtracting \cref{eq:rec-bis} from \cref{eq:rec-pert} gives
\begin{equation} \label{eq:rec-error}
  (n + 1) \delta_{n + 1}
  = (2 n + 1) x \delta_n - n \delta_{n - 1} + \eta_n,
\end{equation}
with $\delta_0 = 0$.
Consider the formal generating series
$\delta(z) = \sum_{n \geq 0} \delta_n z^n$
and
$\eta(z) = \sum_{n \geq 0} \eta_n z^n$.
Noting that \cref{eq:rec-error} holds for all $n \in \mathbb{Z}$ if
the sequences $(\delta_n)$ and $(\eta_n)$ are extended by~$0$ for
$n < 0$
and using the relations
\[
  \sum_{n=-\infty}^{\infty} f_{n - 1} z^n
  = z \sum_{n=-\infty}^{\infty} f_n z^n,
  \qquad
  \sum_{n=-\infty}^{\infty} n f_{n} z^n
  = z \ddz \sum_{n=-\infty}^{\infty} f_n z^n,
\]
we see that \cref{eq:rec-error} translates into
\[ (1 - 2 xz + z^2) z \ddz \delta (z)
   = z (x - z) \delta (z) + z \eta (z). \]
The solution of this differential equation with $\delta (0) = 0$ reads,
cf.~\cref{eq:generating-series},
\[ \delta(z) = p(z)  \int_0^z \eta(w) \, p(w) \, \dw,
   \qquad
   p(z) = \sum_{n=0}^{\infty} p_n z^n
        = F(x, z)
        = \frac{1}{\sqrt{1 - 2 xz + z^2}}. \]
This is an exact expression of the ``global'' error $\delta$ in terms
of the ``local'' errors~$\varepsilon_n$.
Since $\abs{p_n} = \abs{P_n(x)} \leq 1$
and $\abs{\eta_n} \leq (n+1) \bar \varepsilon$,
it follows by \cref{lemma:majorants} that
\[
  \abs{\delta_n}
  = \left| [z^n] \left( p(z)  \int_0^z \eta(w) \, p(w) \, \dw \right) \right|
  \leq [z^n] \left(
    \frac{1}{1-z}
    \int_0^z \frac{\bar \varepsilon}{(1 - w)^2} \frac{1}{1-w} \dw \right)
\]
and therefore
\[
  \abs{\delta_n}
  \leq [z^n] \left( \frac12 \frac{\bar \varepsilon}{(1-z)^3} \right)
  = \frac{(n+1)(n+2)}{4} \, \bar \varepsilon. \qed
\]
\end{proof}

\begin{corollary}
\label{cor:bound-gmprec}
Suppose that $x = \hat x \, 2^{-t}$ for some $t \geq 0$ and
$\hat x \in \mathbb Z$.
The sequence $(\hat p_n)_{n \geq 0}$ defined by \cref{eq:rec-fxpt}
satisfies
\begin{equation} \label{eq:bound-fxprec}
  \abs{ \hat p_n 2^{-t} - p_n }
  \leq 0.75 \, (n+1) (n+1) \, 2^{-t},
  \qquad
  n \geq 0.
\end{equation}
Furthermore, the quantities $p$, $q$ returned by \cref{alg:gmprec}
are such that
\begin{equation} \label{eq:bound-gmprec}
  |p - 2^t P_{n-1}(x)|, |q - 2^t P_{n}(x)|
  \le 0.75 (n+1)(n+2) + 1.
\end{equation}
\end{corollary}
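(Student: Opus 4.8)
The plan is to treat both claims as instances of the perturbed recurrence analysed in \cref{prop:rec-error}. For \cref{eq:bound-fxprec}, I would set $\tilde p_n = \hat p_n\,2^{-t}$ and note that $\tilde p_0 = 1 = P_0(x)$ and $\tilde p_1 = x = P_1(x)$ hold exactly, so $(\tilde p_n)$ fits the framework of \cref{eq:rec-pert}. First I would read off the per-step perturbation created by the two integer truncations in \cref{eq:rec-fxpt}: writing the inner and outer truncation errors as $\alpha_n,\beta_n\in[0,1)$ and using $\hat x\,\hat p_n\,2^{-t} = 2^t x\tilde p_n$, a direct expansion gives
\[ \varepsilon_n = -\Bigl(\tfrac{(2n+1)\alpha_n}{n+1} + \beta_n\Bigr)2^{-t}, \]
so that $\varepsilon_0 = 0$ and, with $\eta_n = (n+1)\varepsilon_n$ as in the proof of \cref{prop:rec-error}, one has the sharpened bound $|\eta_n| \le (3n+2)\,2^{-t}$ for $n \ge 1$.

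The second step is to rerun the majorant computation of \cref{prop:rec-error} with this refined input rather than invoking the proposition verbatim. Since $\eta_0 = 0$, \cref{lemma:majorants} gives $\eta(z) \ll_z 2^{-t}(5z - 2z^2)/(1-z)^2$, and combining with $p(z) \ll_z (1-z)^{-1}$ through the exact expression $\delta(z) = p(z)\int_0^z \eta(w)\,p(w)\,\dw$ yields, after a partial-fraction expansion of $(5w-2w^2)/(1-w)^3$, a coefficient bound of the shape
\[ |\delta_n| \le 2^{-t}\Bigl(\tfrac{3(n+1)(n+2)}{4} - (n+1) - 2H_n - \tfrac12\Bigr), \]
where $H_n$ is the $n$-th harmonic number. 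As the bracket is bounded by $\tfrac34(n+1)^2$ for every $n \ge 0$, this is exactly \cref{eq:bound-fxprec}. I would emphasise that the refinement is essential: feeding the crude uniform bound $\bar\varepsilon = 3\cdot 2^{-t}$ into \cref{prop:rec-error} as a black box only produces the weaker constant $\tfrac34(n+1)(n+2)$, so one must exploit both $\varepsilon_0 = 0$ and the exact coefficient $3n+2$ to reach $(n+1)^2$.

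For \cref{eq:bound-gmprec} I would first establish the exact-arithmetic behaviour of \cref{alg:gmprec}. Tracking the loop with its $-k^2$ coefficient and delayed denominator, an induction proves the scaling identity that after iteration $k$ the variables $\mathtt q,\mathtt p$ represent $(k+1)!\,2^t P_{k+1}(x)$ and $k!\,2^t P_k(x)$, with the invariant $\sigma_{\mathtt q} = (k{+}1)\sigma_{\mathtt p}$ between the running scalings and $\mathtt{den}$ always equal to the scaling of $\mathtt q$; the two closing divisions by $\mathtt{den}/n$ and $\mathtt{den}$ then recover $2^t P_{n-1}(x)$ and $2^t P_n(x)$ exactly. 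On top of this I would layer the rounding analysis: the inner truncation at each step is a perturbation of \cref{eq:rec-bis} which, pulled back to the scale of $P_n$, is bounded as above, so \cref{prop:rec-error} (with the uniform bound, hence the weaker constant) contributes at most $\tfrac34(n+1)(n+2)$ to the scaled errors $|\mathtt p - 2^t P_{n-1}(x)|$ and $|\mathtt q - 2^t P_n(x)|$, while the two final truncating divisions each cost at most one integer ulp and account for the additive $+1$.

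The main obstacle is the delayed (batched) division. Each overflow-triggered call to \texttt{mpz\_tdiv\_q\_ui} truncates \emph{both} $\mathtt p$ and $\mathtt q$ in place, and the value representing $P_k$ is altered \emph{after} it has been consumed by the step producing $P_{k+1}$ but \emph{before} it is used again; consequently these roundings do not fit the single-step model of \cref{eq:rec-pert} verbatim, and one cannot simply quote \cref{eq:bound-fxprec}. The delicate part of the proof is therefore to show that, thanks to the scaling identity (under which a one-ulp truncation at a reset corresponds to a perturbation of size at most $2^{-t}$ in the $P$-scale), the cumulative effect of all batch divisions is still absorbed by the $\tfrac34(n+1)(n+2)$ budget. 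Everything else is routine bookkeeping with \cref{lemma:majorants}.
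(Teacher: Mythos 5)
Your overall strategy---recasting both claims as instances of the perturbed recurrence \cref{eq:rec-pert} and running the majorant argument of \cref{prop:rec-error}---is exactly the paper's. For the first claim your treatment is correct and in fact \emph{sharper} than the paper's: the paper simply writes $\hat p_{n+1}$ as an exact instance of \cref{eq:rec-pert} with $\varepsilon_n = \bigl(\tfrac{2n+1}{n+1}\alpha_n + \beta_n\bigr)2^{-t}$, bounds $|\varepsilon_n|\le 3\cdot 2^{-t}$, and cites \cref{prop:rec-error}, which only yields the constant $\tfrac34(n+1)(n+2)$; the $(n+1)(n+1)$ printed in \cref{eq:bound-fxprec} is best read as a typo for $(n+1)(n+2)$, the form actually used in \cref{eq:bound-gmprec}. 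Your refined computation with $\eta_0=0$ and $|\eta_n|\le(3n+2)2^{-t}$ checks out and does deliver the literal $(n+1)^2$. One small correction: truncation toward zero produces errors of either sign when the iterates are negative (as they often are), so $\alpha_n,\beta_n$ should be taken in $[-1,1]$ rather than $[0,1)$; since only magnitudes enter the majorant argument, nothing breaks.

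The gap is in the second claim, precisely at the point you label ``the delicate part'': you assert, but do not show, that the batched divisions are absorbed by the $\tfrac34(n+1)(n+2)$ budget. That is the actual content of the paper's proof of \cref{eq:bound-gmprec}, and it is closed by a choice of auxiliary sequence that makes your obstacle disappear: one sets $\tilde p_k = 2^{-t}\,\mathtt{q}_{k-1}/\mathtt{d}_{k-1}$, the \emph{exact rational} ratio of the integer variable to its running scaling (not the truncated integer). In both branches of the conditional one can write $\mathtt{p}_k/\mathtt{d}_k = \mathtt{q}_{k-1}/((k+1)\mathtt{d}_{k-1}) + \alpha_k/(k+1)$ together with the analogous identity for $\mathtt{q}_k/\mathtt{d}_k$ involving perturbations $\beta_k,\gamma_k$ of absolute value at most $1$ (with $\alpha_k=0$ when no reset occurs). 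The first identity gives $2^{-t}k\,\mathtt{p}_{k-1}/\mathtt{d}_{k-1} = \tilde p_{k-1} + \alpha_{k-1}2^{-t}$, and substituting this into the second shows that $(\tilde p_k)$ \emph{does} satisfy the single-step model \cref{eq:rec-pert} verbatim, with $|\varepsilon_k| \le \bigl(\tfrac{k}{k+1} + \tfrac{2k+1}{(k+1)\mathtt{d}_{k-1}} + \tfrac{1}{k+1}\bigr)2^{-t} \le 3\cdot 2^{-t}$: the reset truncation of $\mathtt{p}$ simply enters one step later with coefficient $-k/(k+1)$. \Cref{prop:rec-error} then applies unchanged, and the two final divisions contribute the additive $+1$. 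Without this (or equivalent) bookkeeping the second inequality is not proved. Note also that your exact-arithmetic scalings $(k+1)!$ and $k!$ hold only in the absence of resets; the correct invariant involves the running partial product stored in $\mathtt{den}$.
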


\begin{proof}
We can write
\[
  \hat{p}_{n + 1} = \frac{1}{n + 1}  \bigl((2 n + 1)  (\hat{x} \,
        \hat{p}_n \, 2^{- t} + \alpha_n) - n \, \hat{p}_{n - 1}\bigr)
        + \beta_n
\]
for some $\alpha_n$, $\beta_n$ of absolute value at most one, and
hence
\[
  \hat{p}_{n + 1} = \frac{1}{n + 1}  \bigl((2 n + 1) \, \hat{x} \,
  \hat{p}_n \, 2^{- t} - n \, \hat{p}_{n - 1}\bigr) + \varepsilon_n,
  \qquad
  \varepsilon_n = \frac{2 n + 1}{n + 1} \alpha_n + \beta_n.
\]
where $\abs{\varepsilon_n} \leq 3$.
\cref{prop:rec-error} applied to
$\tilde p_n = \hat p_n \, 2^{-t}$
then provides the bound~\cref{eq:bound-fxprec}.

\newcommand{\vp}{\mathtt p}
\newcommand{\vq}{\mathtt q}
\newcommand{\vd}{\mathtt d}

Turning to \cref{alg:gmprec},
let $\vp_0$, $\vq_0$, $\vd_0$ denote the values of the variables
$\vp$, $\vq$, $\mathtt{den}$ before the loop,
and $\vp_k$, $\vq_k$, $\vd_k$ their values at the end of iteration~$k$.
Consider the sequence
$\tilde p_k = 2^{-t} \vq_{k-1}/\vd_{k-1}$, $k \geq 1$,
extended by $\tilde p_0 = 1$ and an arbitrary (finite)~$\tilde p_{-1}$.
For all $k \geq 1$, depending whether the conditional branch is taken, we
have one of the systems of equations
\begin{align}
  \vp_k &= \vq_{k-1}, &
  \vq_k &= (2k+1) \lceil \hat x \vq_{k-1} 2^{-t} \rfloor
          - k^2 \vp_{k-1}, &
  \vd_k &= (k+1) \, \vd_{k-1} \\
  \vp_k &= \left\lceil \frac{\vq_{k-1}}{\vd_{k-1}} \right\rfloor, &
  \vq_k &= \left\lceil \frac{
          (2k+1) \lceil \hat x \vq_{k-1} 2^{-t} \rfloor - k^2 \vp_{k-1}}
          {\vd_{k-1}} \right\rfloor, &
  \vd_k &= k+1.
\end{align}
In both cases, we can write
\begin{align*}
  \frac{\vp_k}{\vd_k}
  &= \frac{\vq_{k-1}}{(k+1) \vd_{k-1}} + \frac{\alpha_k}{k+1},
   &
  \frac{\vq_k}{\vd_k}
  &= \frac{ (2k+1) (x \vq_{k-1} + \beta_k) - k^2 \vp_{k-1} }
          { (k+1) \vd_{k-1} }
     + \frac{\gamma_k}{k+1}
\end{align*}
with $|\alpha_k|, |\beta_k|, |\gamma_k| \leq 1$.
The first equation implies
$2^{-t} k \, \vp_{k-1}/\vd_{k-1} = \tilde p_{k-1} + \alpha_{k-1} 2^{-t}$
for $k \geq 2$.
Since the latter equality also holds for $k = 1$ with $\alpha_0 = 0$, we can
substitute it in the second equation, yielding
\[
  \tilde p_{k+1}
  = \frac{ (2k+1) x \tilde p_k - k \tilde p_{k-1} }{k+1}
     + 2^{-t} \left(
          \frac{-k \alpha_{k-1}}{k+1}
        + \frac{(2k + 1) \beta_k}{(k+1) \vd_{k-1}}
        + \frac{\gamma_k}{k+1}
       \right).
\]
This relation holds for $k \geq 1$, and we extend it to $k=0$ by setting
$\beta_0 = \gamma_0 = 0$.
Thus, $\tilde p_k$ also satisfies~\cref{eq:rec-pert} with
$|\varepsilon_n| \leq 3 \cdot 2^{-t}$, and \cref{prop:rec-error} applies.
The final values of $\vq$~and~$\vp$ are respectively
$\lceil 2^t \tilde p_n \rfloor$
and
$\lceil n \vp_{n-1}/\vd_{n-1} \rfloor
= \lceil 2^t \tilde p_{n-1} + \alpha_{n-1} \rfloor$,
whence the bound~\cref{eq:bound-gmprec}.
\end{proof}

We do not use asymptotically faster evaluation techniques for large~$n$ in
combination with this recurrence, since the series expansions to be
presented next perform very well in this case.

\section{Series expansions}

\label{sec:series}

For large $n$ or $p$, we employ series expansions of $P_n(x)$ with
respect to either $n$ or $x$ rather than the algorithm from the
previous section.
The coefficients of the series are also computed by recurrence,
but fewer than $n$ terms will typically be required.
Let us now review the various series expansions that we are using (an
asymptotic expansion as $n \to \infty$, series expansions at $x=0$
and $x=1$), before discussing their efficient evaluation in the next
section.

\subsection{Asymptotic series}

\label{sec:series-asy}

For fixed $|x| < 1$ or equivalently $x = \cos(\theta)$ with $0 < \theta < \pi$,
an asymptotic expansion for $P_n(x)$ as $n \to \infty$
can be given as \cite[Eq.~3.4]{Bogaert2012}
\begin{equation}
\label{eq:asymptotic}
P_n(\cos(\theta)) = \left(\frac{2}{\pi \sin(\theta)}\right)^{1/2}
\sum_{k=0}^{K-1} C_{n,k} \frac{\cos(\alpha_{n,k}(\theta))}{\sin^k(\theta)}
+ \xi_{n,K}(\theta)
\end{equation}
(for arbitrary $K \geq 1$), where
\begin{equation}
\label{eq:Cnk-Gamma}
C_{n,k} = \frac{[\Gamma(k+1/2)]^2 \Gamma(n+1)}{\pi 2^k \Gamma(n+k+3/2) \Gamma(k+1)},
\end{equation}
\begin{equation*}
\alpha_{n,k}(\theta) = (n+k+1/2) \theta - (k+1/2) \pi / 2,
\end{equation*}
and the error term satisfies
\begin{equation}
\label{eq:truncerr0}
|\xi_{n,K}(\theta)| < 2 \left(\frac{2}{\pi \sin(\theta)}\right)^{1/2} \frac{C_{n,K}}{\sin^K(\theta)}.
\end{equation}
The coefficients $C_{n,k}$ are a hypergeometric sequence with
\begin{equation}
\label{eq:asymprec}
\frac{C_{n,k}}{C_{n,k-1}} = \frac{(2k-1)^2}{4 k (2n+2k+1)}, \quad C_{n,0} = \frac{1}{\sqrt{\pi}} \frac{4^n}{(n+\tfrac{1}{2}) {2n \choose n}}.
\end{equation}
To evaluate the error bound, we can use the following inequality deduced
from~\cref{eq:Cnk-Gamma}:
\begin{equation}
\label{eq:truncerr0b}
C_{n,k} \le \frac{1}{\pi n^{1/2}} \frac{k! n!}{2^k (n+k)!} \le \frac{1}{\pi n^{1/2}} \frac{k!}{(2n)^k},
\quad n, k \ge 1.
\end{equation}

The condition
$|\xi_{n,K}(\theta)| \leq 2^{-p}$
is satisfiable as soon as
$\sin(\theta) \geq (p+3)\ln(2)/(2n)$,
as we can see by choosing
$K = \lfloor 2 n \sin(\theta) \rfloor$
and using the inequality
$k! \leq k^{k+1/2} e^{1-k}$.
Since the Gauss-Legendre nodes are distributed linearly in~$\theta$,
the asymptotic expansion gives a candidate algorithm for all but about
$(\log(2)/\pi)\,p/n$ of the nodes as $p/n \to 0$.
It is in fact a convergent series
when $2 \sin(\theta) > 1$, which allows evaluating $P_n(x)$ to unbounded
accuracy for fixed $n$ when $\tfrac{1}{6}\pi < \theta < \tfrac{5}{6} \pi$.
The particular form \cref{eq:asymptotic}
must be used for this purpose; there is a slightly different version of the expansion
which is asymptotic to $P_n(x)$ (for fixed $K$ when $n \to \infty$)
but paradoxically
converges to $2 P_n(x)$ (for fixed~$n$ when $K \to \infty$); see \cite[Section~10.3]{Olver1997} and \cite[Section~18.15(iii)]{Olver2010}.

We can restate \cref{eq:asymptotic} as a hypergeometric series
by working with complex numbers.
Letting $\omega = 1 - (x/y) i$, with $x = \cos(\theta)$ and $y =
\sin(\theta)$ as usual, we have
\[ (1-i) (x+iy)^{n+1/2} \omega^k
= \sqrt2 \, e^{-i \frac{\pi}{4}} \, e^{i (n+1/2)\theta} \,
  e^{i k (\theta-\pi/2)} \, y^{-k}
= \sqrt2 \, \frac{\exp(i \, \alpha_{n,k}(\theta))}{\sin^k(\theta)}, \]
and hence
\begin{equation}
\label{eq:asymptoticcomplex}
P_n(x) = \frac{1}{\sqrt{\pi y}} \, \operatorname{Re}\left[
(1-i) (x+y i)^{n+1/2}
\sum_{k=0}^{K-1} C_{n,k} \omega^k\right] + \xi_{n,K}(\theta).
\end{equation}
This eliminates the explicit trigonometric functions and permits using
\cref{alg:hyprs} below to evaluate the series.

The evaluation of \cref{eq:asymptoticcomplex} in ball arithmetic
is numerically stable, and we therefore only need to add a few
guard bits to the working precision.

\subsection{Expansion at zero}

\label{sec:series-zero}

If $n = 2d$ is even, the expansion of $P_n(x)$ in the monomial basis
reads
\begin{align}
\label{eq:pzeroseries0}
\begin{split}
P_{2d}(x) &= (-1)^d \sum_{k=0}^d \frac{(-1)^k}{2^n} {n \choose d-k} {n+2k \choose n} x^{2k} \\
&= \frac{(-1)^d}{2^{2d}} {2d \choose d} \sum_{k=0}^d A_{-1}(d,k) (-x^2)^k,
\end{split}
\end{align}
and if $n = 2d+1$ is odd, we have
\begin{align}
\label{eq:pzeroseries1}
\begin{split}
P_{2d+1}(x) &= (-1)^d x \sum_{k=0}^d \frac{(-1)^k}{2^n} {n \choose d-k} {n+2k+1 \choose n} x^{2k} \\
&= \frac{(-1)^d (d+1)}{2^{2d+1}} {2d+2 \choose d+1} \,x\, \sum_{k=0}^d A_{+1}(d,k) (-x^2)^k
\end{split}
\end{align}
where the hypergeometric sequences $A_{\pm1}$ can be defined by
$A_{\pm 1}(d,0) = 1$ and
\begin{equation}
\label{eq:pzerorec}
\frac{A_{\sigma}(d,k)}{A_{\sigma}(d,k-1)} = \frac{(d-k+1) (2d+2k+\sigma)}{k (2k+\sigma)}, \quad \sigma \in \{-1,+1\}.
\end{equation}

At very high precision, we evaluate the full polynomials,
where \cref{eq:pzeroseries0} and \cref{eq:pzeroseries1}
have the advantage compared to other expansions
of requiring only $n/2$ terms due to the odd-even form.
At lower precision $p$, the high order terms will be smaller than
$2^{-p}$ when $|x|$ is small, and we can truncate the series
accordingly and add a bound for the omitted terms
to the radius of the computed ball.
When the series are truncated after the $k = K - 1$ term
(for any $K < d + 1$), comparison with a geometric series
shows that the error is bounded by the first
omitted term times a simple factor.

\begin{proposition}
For $\sigma \in \{-1,+1\}$, the error when truncating
the bottom sum in
\cref{eq:pzeroseries0} or \cref{eq:pzeroseries1} (with prefactors removed)
after the $k = K - 1$ term satisfies
\begin{equation}
\label{eq:truncerr1}
\left| \sum_{k=K}^d A_{\sigma}(d,k) (-x^2)^k \right| \le \frac{A_{\sigma}(d,K) |x|^{2K}}{1-\alpha},
\quad\!\! \alpha = |x|^2 \frac{(d-K+1)(2d+2K+\sigma)}{K (2K+\sigma)}
\end{equation}
provided that $\alpha < 1$.
\end{proposition}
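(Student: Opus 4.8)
The plan is to bound the tail termwise and compare it to a geometric series whose ratio is exactly~$\alpha$. Write $a_k = \abs{A_\sigma(d,k)}\,\abs{x}^{2k}$ for the magnitudes of the terms, so that by the triangle inequality $\bigl|\sum_{k=K}^d A_\sigma(d,k)(-x^2)^k\bigr| \le \sum_{k=K}^d a_k$. From the recurrence \cref{eq:pzerorec} the consecutive ratios are
\[
  \frac{a_k}{a_{k-1}} = \abs{x}^2\,\frac{(d-k+1)(2d+2k+\sigma)}{k(2k+\sigma)} =: r_k,
\]
and $\alpha = r_K$ by definition. I would first note that every factor here is positive for $1 \le k \le d$ and $\sigma \in \{-1,+1\}$ (in particular $2k+\sigma \ge 1$ and $d-k+1 \ge 1$), so $A_\sigma(d,k) > 0$ throughout the range and the $a_k$ are genuine term magnitudes.

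The crux is to show that $r_k$ is nonincreasing for $k \ge K$, so that $r_k \le r_K = \alpha$ for all such~$k$. I would prove this by factoring the quotient as
\[
  \frac{r_{k+1}}{r_k}
  = \frac{d-k}{d-k+1}\cdot\frac{k}{k+1}
    \cdot\frac{2k+\sigma}{2k+2+\sigma}
    \cdot\frac{2d+2k+2+\sigma}{2d+2k+\sigma}.
\]
The first three factors are each at most~$1$, while only the last exceeds~$1$. Pairing the last two and writing them as $(1-b)(1+a)$ with $a = 2/(2d+2k+\sigma)$ and $b = 2/(2k+2+\sigma)$, the hypothesis $d \ge 1$ gives $2d+2k+\sigma \ge 2k+2+\sigma$, hence $0 \le a \le b$ and $(1+a)(1-b) = 1 + (a-b) - ab \le 1$. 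Multiplying by the remaining factors $\le 1$ yields $r_{k+1}/r_k \le 1$. This monotonicity is the main obstacle; the rest is routine, and it reduces to the elementary inequality just described.

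Granting the monotonicity, telescoping the ratios gives $a_k = a_K \prod_{j=K+1}^{k} r_j \le \alpha^{\,k-K} a_K$ for every $k \ge K$, and therefore
\[
  \sum_{k=K}^d a_k
  \le a_K \sum_{k=K}^d \alpha^{\,k-K}
  \le a_K \sum_{j=0}^{\infty} \alpha^{j}
  = \frac{a_K}{1-\alpha}
  = \frac{A_\sigma(d,K)\,\abs{x}^{2K}}{1-\alpha},
\]
where the penultimate step uses $\alpha < 1$ to sum the geometric series. Combining this with the triangle-inequality bound above yields \cref{eq:truncerr1}.
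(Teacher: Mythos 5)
Your proof is correct and follows exactly the route the paper intends: the paper gives no detailed argument, only the remark that the bound follows by ``comparison with a geometric series,'' and your write-up supplies precisely that comparison, including the one nontrivial ingredient (the monotonicity of the term ratios $r_k$ for $k\ge K$), which you verify correctly.
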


For bounding $A_{\sigma}(d,K)$ in this expression, and for selecting an
appropriate truncation point $K$, we use the binomial closed forms
\cref{eq:pzeroseries0}, \cref{eq:pzeroseries1} together with the
remarks in \cref{sec:binomial}.

The alternating series \cref{eq:pzeroseries0} and \cref{eq:pzeroseries1}
may suffer from significant cancellation, which requires use of
increased precision.
We can estimate the magnitude by
noting that no cancellation occurs
if $x$ is an imaginary number.
Solving the majorizing recurrence $f_n = 2 |z| f_{n-1} + f_{n-2}$ with
$f_0 = 1$, $f_1 = |z|$ shows that
\[ |P_n(z)| \le |P_n(i|z|)| \le \left(|z| + \sqrt{1 + |z|^2}\right)^n. \]
Therefore, the possible cancellation assuming that $|P_n(x)| \approx 1$
is about
\begin{equation*}
p_A = n \log_2\bigl(|x| + \sqrt{1 + |x|^2}\bigr)
\end{equation*}
bits (which is at most $n \log_2 (1+\sqrt{2}) \approx 1.27\, n$),
so using ball arithmetic with about $p + p_A$ bits of working precision
for the series evaluation gives $p$-bit accuracy.

\subsection{Expansion at one}

\label{sec:series-one}

Expanding at $x = 1$ yields
\begin{equation}
\label{eq:poneseries}
P_n(x) = \sum_{k=0}^n c_{n,k} u^k, \quad c_{n,k} = {n \choose k} {n+k \choose k}
\end{equation}
where $u = (x-1)/2$.
The coefficients $c_{n,k}$ are hypergeometric with initial value $c_{n,0} = 1$
and term ratio
\begin{equation}
\label{eq:poneprec}
\frac{c_{n,k}}{c_{n,k-1}} = \frac{(n-k+1)(n+k)}{k^2}.
\end{equation}
As in the previous section, we can truncate \cref{eq:poneseries} and
bound the error by comparison with a geometric series.

\begin{proposition}
\label{prop:poneseries}
The error when truncating \cref{eq:poneseries} after the $k = K - 1$ term satisfies
\begin{equation}
\label{eq:truncerr2}
\left| \sum_{k=K}^n c_{n,k} u^k \right| \le \frac{c_{n,K} |u|^K}{1-\alpha},
\qquad \alpha = |u| \frac{(n-K)(n+K+1)}{(K+1)^2}
\end{equation}
provided that $\alpha < 1$.
\end{proposition}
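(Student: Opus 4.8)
The plan is to dominate the tail $\sum_{k=K}^n c_{n,k} u^k$ by a geometric series, exactly as for a convergent hypergeometric tail and in direct parallel to the argument behind~\cref{eq:truncerr1}. Since all coefficients $c_{n,k} = {n \choose k}{n+k \choose k}$ are nonnegative, I can move the absolute value inside the sum and write, via the term ratio $r(j) = c_{n,j}/c_{n,j-1} = (n-j+1)(n+j)/j^2$ from~\cref{eq:poneprec},
\[
  |c_{n,k} u^k| = c_{n,K} |u|^K \prod_{j=K+1}^{k} |u| \, r(j).
\]
The whole proof then reduces to bounding each factor $|u|\,r(j)$ in this telescoping product by~$\alpha$.

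The key step is to show that the term ratio $r(k)$ is decreasing in $k$, so that over the range $k \ge K+1$ it attains its maximum at the left endpoint $k = K+1$. Rewriting
\[
  r(k) = \frac{(n-k+1)(n+k)}{k^2} = \frac{n(n+1)}{k^2} - 1 + \frac1k,
\]
one sees at once that $r$ is strictly decreasing for $k > 0$, since each non-constant term is. Hence $r(k) \le r(K+1) = (n-K)(n+K+1)/(K+1)^2$ for every $k \ge K+1$, and therefore $|u|\,r(k) \le \alpha$ throughout that range.

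With this uniform bound on the term ratio in hand, the telescoping product gives $|c_{n,k} u^k| \le c_{n,K} |u|^K \alpha^{k-K}$ for all $k \ge K$. Summing over $k$ from $K$ to $n$ and extending the geometric sum to infinity—legitimate because $\alpha < 1$ by hypothesis and all summands are nonnegative—yields
\[
  \Bigl| \sum_{k=K}^n c_{n,k} u^k \Bigr|
  \le c_{n,K} |u|^K \sum_{j=0}^{\infty} \alpha^j
  = \frac{c_{n,K} |u|^K}{1-\alpha},
\]
which is the claimed bound.

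There is no real obstacle here: the scheme is the same geometric-majorant comparison already used for the expansion at zero, and the only point requiring a moment's care is the monotonicity of $r$, which is precisely what guarantees that $\alpha$—defined from the value of the ratio at $k = K+1$—dominates every factor appearing in the tail. Everything else is a routine geometric-series estimate.
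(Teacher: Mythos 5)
Your proof is correct and follows exactly the route the paper intends: the paper gives no explicit proof beyond the remark that the tail is bounded ``by comparison with a geometric series,'' and your argument---monotonicity of the term ratio $r(k) = n(n+1)/k^2 + 1/k - 1$ in $k$, hence a uniform bound $|u|\,r(k) \le \alpha$ for $k \ge K+1$, hence a geometric majorant summing to $c_{n,K}|u|^K/(1-\alpha)$---is precisely that comparison, carried out carefully.
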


For $u \ge 0$, \cref{eq:poneseries} does not suffer from cancellation.
For $u < 0$, we can estimate the amount of
cancellation from the magnitude of $P_n(x')$ where $|u| = (x'-1)/2$.
For not too large $x' \ge 1$, a very good approximation is
\[ P_n(x') \leq 2 \sum_{k=0}^{\infty} \frac{n^{2k}}{k!^2} |u|^k
           = 2\,I_0(2n\sqrt{|u|})
           \leq 2\,e^{2n\sqrt{|u|}}. \]
We therefore need about $2n\sqrt{\max(0,-u)} / \ln(2)$ bits
of increased precision.

We can compute $P'_n$ from $P_n$ and $P_{n-1}$, but since this
involves a division by $1-x^2$, it is better to
evaluate $P'_n$ directly when $x$ is close to 1.
We have
$P'_n(x) = \sum_{k=0}^{n-1} c'_{n,k} u^k$
where $c'_{n,k} = (k+1) c_{n,k+1} / 2$ satisfies
\begin{equation}
\label{eq:ponerecprime}
  c'_{n,0} = \frac{n(n+1)}{2}, \qquad
  \frac{c'_{n,k}}{c'_{n,k-1}} = \frac{(n-k)(n+k+1)}{k(k+1)}.
\end{equation}
Since $c'_{n,k} \le n c_{n,k+1}$, the analog
\begin{equation}
\label{eq:truncerr2b}
\left| \sum_{k=K}^n c'_{n,k} u^k \right| \le n {n \choose K+1}{n+K+1 \choose K+1} |u|^K \frac{1}{1-\alpha}
\end{equation}
of \cref{prop:poneseries} holds with $u$ and $\alpha$ as above.

\section{Fast evaluation of series expansions}

\label{sec:eval}

All these series expansions are amenable to fast evaluation techniques
specific to multiple-precision arithmetic. Using such fast summation
algorithms is critical for achieving good performance at high precision.
We now discuss the algorithm that we use for evaluating the series of
the previous section.

\subsection{Rectangular splitting}

\label{sec:serieseval}

We use rectangular splitting~\cite{Smith1989,Johansson2014rectangular}
to evaluate hypergeometric series with
rational parameters where the argument $x$ is a high-precision number.
This reduces evaluating a $K$-term series to
$\OO(K)$ cheap scalar operations (additions and multiplications or divisions
by small integer coefficients)
and about $2\sqrt{K}$ expensive nonscalar operations (general multiplications),
whereas direct evaluation of the hypergeometric
recurrence uses $\OO(K)$ expensive operations.

\begin{algorithm}[h!]
  \caption{Evaluation of hypergeometric series using rectangular splitting}
  \small
  \label{alg:hyprs}
  \begin{algorithmic}[1]
    \Require An arbitrary $x$, recurrence data $p, q \in \mathbb{Z}[k]$, integer $K \ge 0$, offset $\Omega \in \{0,1\}$.
    \Ensure $s = \sum_{k=\Omega}^{K-1} x^k \prod_{j=\Omega}^k p(j) / q(j)$
    \State $m \gets \lfloor \sqrt K \rfloor$; precompute $[1, x, x^2, \ldots, x^m]$ \Comment{Tuning parameter: any $m \ge 1$ can be used}
    \State $s \gets 0; \quad k \gets K - 1$
    \While{$k \ge \Omega$}
        \State $u \gets \min(4, k + 1 - \Omega)$  \Comment{Tuning parameter: any $1 \le u \le k+1-\Omega$ can be used}
        \State $(a, b) \gets (k - u + 1, k)$  \Comment{Unrolled range}
        \State \label{step:coeff1} $c \gets \prod_{j=a}^b p(j)$ \Comment{Small integer coefficient}
        \While{$k \ge a$}
            \State $r \gets k \bmod m$
            \If{$k = b$}
                \State $s \gets c \cdot (s + x^r)$ \Comment{Using precomputed power of $x$}
            \Else
                \State $s \gets s + c \cdot x^r$ \Comment{Using precomputed power of $x$}
            \EndIf
            \If{$r = 0$ \textbf{and} $k \ne 0$}
                \State $s \gets s \cdot x^m$ \Comment{Using precomputed power of $x$}
            \EndIf
            \State \label{step:coeff2} $c \gets (c / p(k)) q(k)$ \Comment{Exact small integer division}
            \State $k \gets k - 1$
        \EndWhile
        \State $s \gets s / c$
    \EndWhile
    \State \Return $s$
  \end{algorithmic}
\end{algorithm}

\Cref{alg:hyprs} presents our version of rectangular splitting
for the present application.
We implement the various series expansions by defining
the functions $p(k), q(k)$ used in steps
\ref{step:coeff1}~and~\ref{step:coeff2} according to formulae
\cref{eq:asymprec}, \cref{eq:pzerorec},
\cref{eq:poneprec}, \cref{eq:ponerecprime}.

This algorithm is a generalization of the
method for evaluating Taylor series of elementary
functions given in~\cite{Johansson2015elementary},
which combines rectangular splitting
with partially unrolling the recurrence to reduce the number of scalar divisions
(which in practice are more costly than scalar multiplications).
The terms are computed in the reverse direction to allow
using Horner's rule for the outer
multiplications.

Our code uses ball arithmetic for $x$ and $s$
so that no error analysis is needed, and we use a bignum type for $c$ (so
no overflow can occur regardless of $u$).
For low precision a faster implementation
would be possible using fixed-point arithmetic with tight
control of the word-level operations
as was done for elementary functions in~\cite{Johansson2015elementary}.

The algorithm contains two tuning parameters%
\footnote{Let us stress that the choice of these parameters only affects
the performance of the algorithm, not its correctness.
The same remark holds every other time we resort to heuristics in this
article.}.
The splitting parameter $m$ controls the
number $m$ of multiplications for powers versus the number $K / m$ of
multiplications for Horner's rule.
The choice $m \approx \sqrt K$ is optimal,
but when evaluating two series for the same~$x$
(in our case, to compute both $P_n(x)$ and $P'_n(x)$),
the table of powers can be reused,
and then $m \approx \sqrt{2K}$ minimizes the total cost.

The unrolling parameter $u$ controls the number of coefficients
to collect on a single denominator, reducing the
number of divisions to $N / u$.
Ideally, $u$ should be chosen
so that $\prod_{j=a}^b p(j)$
and $\prod_{j=a}^b q(j)$ fit in 1 or 2 machine words.
The example value $u = 4$ is a reasonable
default, but as an optimization, one might vary $u$
for each iteration of the main loop
to ensure that $c$ always fits in a specific number of words.

The redundant parameter $\Omega$ is a small convenience in the pseudocode.
Setting $\Omega = 1$ and adding the constant term separately
avoids having to make a special case to prevent division by zero
when $q(0) = 0$.

Due to the scalar operations, rectangular splitting ultimately
requires $\OO(K)$ arithmetic operations with $\OOtilde(p)$ bit complexity
each
just like straightforward evaluation of the recurrence, so it is
not a genuine asymptotic improvement, but
it is an improvement in practice and
can give more than a factor 100 speedup at very high precision.
It is possible to genuinely reduce the complexity of evaluating
a hypergeometric sequence to $\OO(\sqrt{K} \log(K))$ arithmetic
operations using a baby-step giant-step method
that employs fast multipoint evaluation,
but in practice rectangular splitting
performs better until both $K$ and~$p$
exceed $10^6$ (see \cite{Johansson2014rectangular}).

\subsection{A note on the bit-burst method}

\label{sec:bitburst}

Another technique for fast evaluation of hypergeometric series,
called binary splitting, would be useful when $p$ is large and the
argument $x$ is a rational number with small numerator and denominator,
but this case is not relevant for our application.
Binary splitting also forms the basis of the bit-burst
method~\cite[Section 4]{ChudnovskyChudnovsky1990}, which permits
evaluating any fixed hypergeometric series at any fixed point---without
the restriction to simple rational numbers of plain binary
splitting---to absolute precision~$p$ in only~$\OOtilde(p)$ bit
operations.
Yet, we do not use this method either in our implementation, due to its
large overhead.

Indeed, computing~$P_n(x)$ by the bit-burst method method requires
$\OO(\log p)$ analytic continuation steps, each of which entails
two evaluations of general solutions of the Legendre differential
equation~\cref{eq:diffeq}.
These solutions are defined by unit initial values at some intermediate
point~$x_0$, and can be represented as a power series of radius of
convergence $1 - |x|$ whose coefficients obey recurrences of order two.
This is to be compared with a single series, given by a first-order
recurrence and typically converging faster, for the expansions
considered in \cref{sec:series-asy}~to~\cref{sec:series-one}.
Thus, the bit-burst method is unlikely to be competitive in the range
of precision we are interested in, especially when the asymptotic
series can be used.

Nevertheless, it can be shown that the solutions with unit initial
values at~$x_0$ of~\cref{eq:diffeq} occuring at intermediate analytic
continuation steps have Taylor coefficients~$c_k$ at~$x_0$ bounded by
$(n^2/(1-|x_0|))^{\OO(k)}$
uniformly in $n$~and~$x_0$.
As a consequence, the asymptotic cost of computing any individual root
of $P_n(x)$ by the bit-burst method and Newton iteration is
$\OOtilde(p) \, \log(n)^{\OO(1)}$.
For computing all the roots, this approach matches the $\OOtilde(np)$
estimate of \cref{thm:complexity}, while allowing for
parallelization and requiring less memory.
It may hence provide an alternative to multipoint
evaluation
worth investigating for precisions in the millions of bits.

\subsection{Binomial coefficients}

\label{sec:binomial}

The prefactors of both the series expansion at ${x = 0}$ and the asymptotic series
contain the central binomial coefficient ${2n \choose n}$.
We need to compute this factor efficiently for any $n$ and precision $p$.
Since ${2n \choose n} \approx 4^n$, it is best to use
an exact algorithm when $n < Cp$ for some small constant $C > 1/2$.
We use the binomial function provided by GMP for $n < 6p + 200$
and otherwise use an asymptotic series for ${2n \choose n}$
with error bounds given
in~\cite[Corollaries 1~and~2]{brent2016asymptotic}.

We also need to quickly estimate the magnitude of binomial coefficients
for error bounds of series truncations.
We have the binary entropy estimate
$$\log_2 {n \choose k} \le n G(k/n), \quad G(x) = -x \log_2(x) - (1-x) \log_2(1-x)$$
and the equivalent form
\begin{equation}
\label{eq:binbound}
{n \choose k} \le \left(\frac{n}{n-k}\right)^{n-k} \left(\frac{n}{k}\right)^k = \frac{n^n}{k^k (n-k)^{n-k}}.
\end{equation}
The function $G(x)$ can be evaluated cheaply with a precomputed
lookup table. A coarse estimate is sufficient, since overestimating
$\log_2 {n \choose k}$ by a few percent only adds a few
percent to the running time.

\section{Algorithm selection}

\label{sec:selection}

We first use a set of cutoffs found experimentally to decide whether to use the basecase recurrence or one of the series expansions. The recurrence is mainly faster for some combinations of $p < 1\,000$, $n < 400$ when computing $(P_n(x), P'_n(x))$ simultaneously and for some combinations of $p < 500$, $n < 100$ when computing $P_n(x)$ alone (in all cases subject to some boundaries $\varepsilon < x < 1 - \varepsilon$); the actual optimal regions are complicated due to differences in overhead between fixed-point integer arithmetic and ball arithmetic for the respective algorithm implementations. For the actual cutoffs used, we refer to the source code.

To select between the series expansion at $x = 0$, the expansion at $x = 1$, and the asymptotic series, the following heuristic is used. For each algorithm $A$, we estimate the evaluation cost as $C_A = K_A (p + p_A)$ where $K_A$ is the number of terms required by algorithm~$A$ ($K_A = \infty$ if $A$ is the asymptotic series and it does not converge to the required accuracy), $p$ is the precision goal, and $p_A$ is the extra precision required by algorithm $A$ due to internal cancellation. For the asymptotic series, we multiply the cost by an extra factor 2 as a penalty for using complex numbers. In the end, we select the algorithm with the lowest $C_A$.

We select $K_A$ and estimate $p_A$ heuristically using machine precision
floating-point computations, working with logarithmic magnitudes to
avoid underflow and overflow.
For example, when~$A$ is the asymptotic series, we search for a
small~$K_A$ such that $\log(K_A!/(2n\sin\theta)^{K_A}) \leq -(p + p_A)$
for some small~$p_A$, in accordance with~\cref{eq:truncerr0b}.
During the actual evaluation of the series expansions, $K_A$ and $p_A$ are then given;
we compute rigorous upper bounds
for the truncation error via
\cref{eq:truncerr0}, \cref{eq:truncerr0b}, \cref{eq:truncerr1},
\cref{eq:truncerr2}, \cref{eq:truncerr2b}, \cref{eq:binbound}
using floating-point arithmetic with directed rounding,
while additional rounding errors are tracked by the ball arithmetic.

\begin{figure}
\begin{centering}
\includegraphics[width=0.8\textwidth]{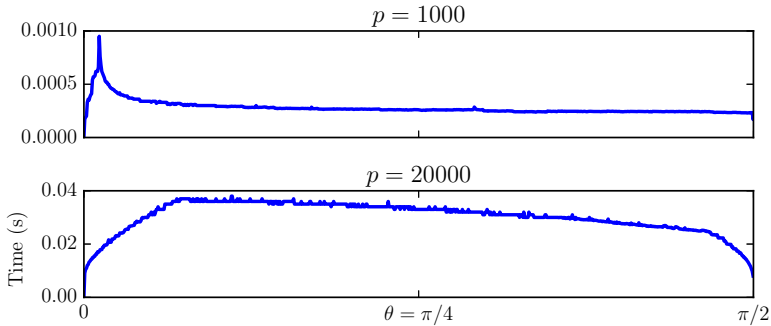}
\caption{Time to evaluate $(P_n(x), P'_n(x))$ as the argument $x = \cos(\theta)$ varies ($x = 1$ at $\theta = 0$ and $x = 0$ at $\theta = \pi/2$), here with $n = 10\,000$, for
precision $p$ somewhat smaller than $n$ (top plot) and somewhat larger (bottom plot). The variable $\theta$ is used for the horizontal scale in this picture
to follow the distribution of the roots (which are clustered near $x = 1$) linearly.}
\label{fig:timeplot}
\end{centering}
\end{figure}

The assumption that the running time is a bilinear function of $K_A$ and $p + p_A$
is not completely realistic, but this cost estimate nonetheless captures the
correct asymptotics when
$$x \to 0, \quad x \to 1, \quad n \to \infty, \quad p \to \infty$$
separately, and hopefully will not be too inaccurate in the transition regions.
This is verified empirically.

\Cref{fig:timeplot} illustrates how the time to evaluate
$(P_n(x), P'_n(x))$ varies with $x$ when the automatic algorithm selection is used.
Here, we have timed the case $n = 10\,000$ for two different $p$.
For large $n$ and $p \ll n$ (top plot), a sharp peak appears at the transition between the series
expansion at $x = 1$ and the asymptotic expansion which is used for most $x$.
This peak tends to become taller
but narrower for larger $n$.
We could presumably get rid of the peak by implementing another algorithm
specifically for the transition region, but the area under the peak is so small
compared to the median baseline that computing all the roots
would not be sped up much.
For $p$ somewhat larger than $n$ (bottom plot), we observe a smooth transition between the
series at $x = 1$ near the left of the picture and the series at $x = 0$
used over the most of the range.

\section{Benchmarks}

As already mentioned, our code for computing values and roots of
Legendre polynomials is part of the Arb library, available
from~\url{http://arblib.org/}.
The benchmark results given in this section were obtained using
prerelease builds of
version~2.13 of Arb.
The implementation of the algorithms of this article is located in the
files \texttt{arb\_hypgeom/legendre\_p\_ui\_*} of the Arb source tree.
In \cref{sec:bench-variants}, we also use some ad hoc code available
from this paper's public git repository%
\footnote{\url{https://github.com/fredrik-johansson/legendrepaper/}}
when comparing the main algorithm with variants absent from the Arb
implementation.
The source code for the experiments themselves can be found in the same
git repository.
Except where otherwise noted, we ran the programs under 64-bit Linux on a
laptop with a
1.90 GHz Intel Core i5-4300U CPU using a single core.

\label{sec:bench}

\subsection{Polynomial evaluation}

\label{sec:bench-variants}

\Cref{fig:benchplot} compares the performance of different methods
for evaluating $(P_n(x), P'_n(x))$ on a set of $n/2$ points
distributed like the positive roots of $P_n(x)$
to simulate one stage of Newton iteration at $p$-bit precision.
The time for the three-term recurrence is set to 1, i.e., we divide
the other timings by this measurement.
The following methods are timed:

\begin{itemize}
\item Our hybrid method (from here on called the ``main algorithm'')
with automatic selection between the three-term
recurrence and different
series expansions.
\item The main algorithm without
the three-term recurrence as the basecase, i.e., using series expansions
even for very small~$n$.
\item Fast multipoint
evaluation of the expanded polynomials $P_n(x)$ and $P'_n(x)$.
As in \cref{sec:series-zero}, we expand
$P_{n}(\sqrt{x})$ for even $n$ and $P_{n}(\sqrt{x})/\sqrt{x}$ for odd $n$
and evaluate at $x^2$ since this halves the amount of work.
The polynomial coefficients are generated using
the hypergeometric recurrence and
the fast multipoint evaluation is done using
\texttt{\_arb\_poly\_evaluate\_vec\_fast\_precomp}
(where the ``precomp'' suffix indicates that
the same product tree is used for both $P_n$ and $P'_n$).
The fast multipoint evaluation is done with $2.9n$ guard bits,
which was found experimentally to be sufficient for full accuracy.
\end{itemize}

\begin{figure}
\begin{centering}
\includegraphics[width=0.9\textwidth]{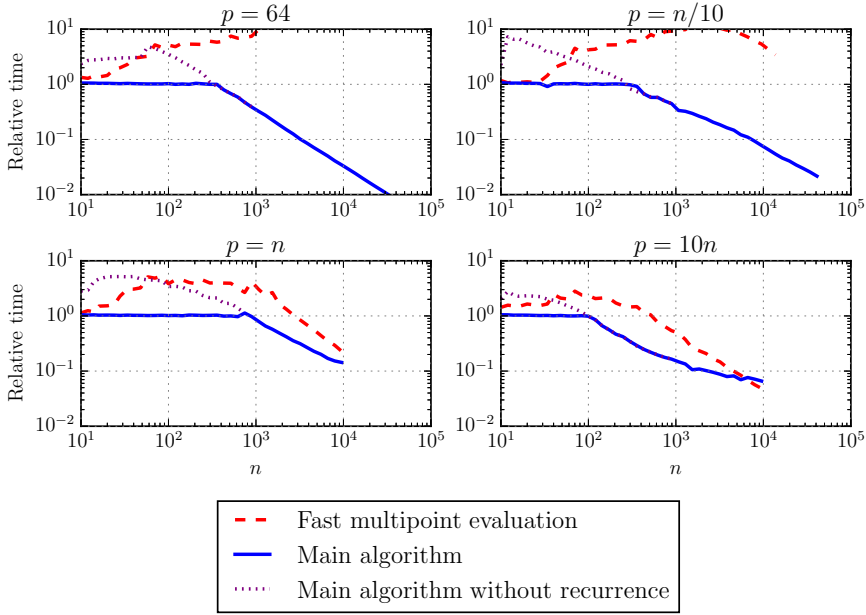}
\caption{Performance comparison of various methods to evaluate $(P_n(x), P'_n(x))$ to $p$-bit precision for a
set of $n / 2$ points $0 < x < 1$ distributed like the roots of $P_n$.
The $y$ axis (relative time) shows the time divided by the time
using the three-term recurrence in fixed-point arithmetic.}
\label{fig:benchplot}
\end{centering}
\end{figure}

The crossover point between the
three-term recurrence and series expansions usually occurs
around $n \approx 10^2 - 10^3$ (it can be as low as $n \approx 10$
if $p$ is much larger).
For modest $n$,
the three-term recurrence is much faster
than the hypergeometric series (typically by a factor 3-4)
due to working with negligible extra precision
and thanks to the low overhead of fixed-point arithmetic.
This low overhead is very useful for typical evaluation of
Legendre polynomials and generation of quadrature nodes
for one or a few machine words of precision.
The crossover point could be lowered slightly
if we used a similarly optimized fixed-point implementation
for the hypergeometric series.

When $p$ is fixed (top left in \cref{fig:benchplot}), the main algorithm is
a factor $\OO(n)$ faster
than the three-term recurrence
since the asymptotic expansion converges to sufficient accuracy after
$\OO(1)$ terms for all sufficiently large $n$.
With the constant precision $p = 64$,
the main algorithm is 3.0
times faster for $n = 10^3$ and 30 times faster for $n = 10^4$.
Conversely, fast multipoint evaluation
with constant $p$ is a factor $\OO(n)$ slower than our
algorithm due to the higher internal precision.

When $p \propto n$ (the three remaining plots in \cref{fig:benchplot}), the main algorithm
appears to show the same $\OO(n)$ speedup
over the three-term recurrence after the crossover point, at least initially.
This speedup should level off asymptotically, but
in practice this only occurs for $n$ larger than $10^4$
where we have already gained a factor 10 or more.
The leveling off is visible in the bottom right figure ($p = 10n$).

Fast multipoint evaluation gives a true asymptotic $\OO(n)$ speedup,
but since it has much higher overhead,
it only starts to give an improvement over
the main algorithm from $n \approx 10^4$
and for $p$ larger than $n$.
When $p = n / 10$, it appears that
fast multipoint evaluation will only break even for $n$ much larger than $10^5$.
We conclude that fast multipoint evaluation would be
worthwhile only for the last few Newton iterations
when computing quadrature nodes for exceptionally high precision.
Since independent evaluations are more convenient and easy to parallelize,
the fast multipoint evaluation method
currently seems to have limited practical value for this application.

\subsection{Quadrature nodes}

\begin{table}[t!]
\caption{Time in seconds to compute the degree-$n$ Gauss-Legendre rules with $p$-bit precision
using our code.
(For large $n$ and $p$, the time was estimated by computing a subset of the nodes and weights.)}
\label{tab:timings}
\begin{center}
\begin{tabular}{ r | c c c c c }
$n\, \backslash \; p$ & 64 & 256 & 1\,024 & 3\,333 & 33\,333 \\
\hline\rule{0pt}{3ex}
20  & 0.000133  &  0.000229  &  0.000510  &  0.00121  &  0.0198  \\
50  & 0.000450  &  0.000870  &  0.00212  &  0.00520  &  0.0710  \\
100  & 0.00138  &  0.00310  &  0.00720  &  0.0163  &  0.191  \\
200  & 0.00550  &  0.0111  &  0.0267  &  0.0550  &  0.589  \\
500  & 0.0236  &  0.0610  &  0.164  &  0.325  &  2.61  \\
1\,000  & 0.0530  &  0.145  &  0.584  &  1.238  &  9.21  \\
2\,000  & 0.0860  &  0.298  &  1.12  &  4.20  &  32.6  \\
5\,000  & 0.191  &  0.665  &  2.67  &  14.3  &  181  \\
10\,000  & 0.350  &  1.26  &  4.93  &  26.6  &  674  \\
100\,000  & 3.60  &  12.2  &  41.3  &  212  &  13\,637  \\
1\,000\,000  & 58.0  &  146  &  411  &  1\,850  &  103\,960  \\
\end{tabular}
\end{center}
\end{table}


\begin{table}[t!]
\caption{Time in seconds for Pari/GP to compute the degree-$n$ Gauss-Legendre quadrature rules with $p$-bit precision. The
numbers in parentheses show the speedup of our code compared to Pari/GP.}
\label{tab:parispeedup}
\begin{center}
\setlength{\tabcolsep}{1ex}
\begin{tabular}{ r | c c c c c }
$n\, \backslash \; p$ & 64 & 256 & 1\,024 & 3\,333 & 33\,333 \\ \hline\rule{0pt}{3ex}
20      & 0.00059 (\footnotesize{$\times 4.4$)}  &  0.00070 (\footnotesize{$\times 3.1$)}  &  0.0015 (\footnotesize{$\times 2.9$)}  &  0.0035 (\footnotesize{$\times 2.9$)}  &  0.078 (\footnotesize{$\times 3.9$)}  \\
50      & 0.0043 (\footnotesize{$\times 9.6$)}  &  0.0050 (\footnotesize{$\times 5.8$)} &  0.010 (\footnotesize{$\times 4.9$)}  &  0.022 (\footnotesize{$\times 4.1$)} &  0.43 (\footnotesize{$\times 6.0$)}  \\
100     &  0.020 (\footnotesize{$\times 15$)}   &   0.023 (\footnotesize{$\times 7.4$)}  &   0.045 (\footnotesize{$\times 6.3$)}  &  0.089 (\footnotesize{$\times 5.5$)} &  1.5 (\footnotesize{$\times 8.1$)}  \\
200     &  0.11 (\footnotesize{$\times 20$)}   &  0.13 (\footnotesize{$\times 11$)}  &  0.24 (\footnotesize{$\times 9.1$)}  &  0.45 (\footnotesize{$\times 8.1$)}  &  6.5 (\footnotesize{$\times 11$)}  \\
500     &  2.0 (\footnotesize{$\times 86$)}     & 2.1 (\footnotesize{$\times 35$)}  &  2.9 (\footnotesize{$\times 18$)}  &  5.1 (\footnotesize{$\times 16$)}  &  58 (\footnotesize{$\times 22$)}  \\
1\,000  & 25 (\footnotesize{$\times 477$)}    &  26 (\footnotesize{$\times 180$)} &  29 (\footnotesize{$\times 49$)}  &  39 (\footnotesize{$\times 32$)}  & 300 (\footnotesize{$\times 33$)} \\
2\,000  &  496 (\footnotesize{$\times 5\,767$)}   & 478 (\footnotesize{$\times 1\,604$)}  &  474 (\footnotesize{$\times 423$)}  &  532 (\footnotesize{$\times 127$)}  &  1\,880 (\footnotesize{$\times 58$)}  \\
\end{tabular}
\end{center}
\end{table}

The function
\texttt{arb\_hypgeom\_legendre\_p\_ui\_root(x, w, n, k, p)}
sets the output variable
$x$ to a ball containing the root of $P_n$ with index~$k$ (we use the indexing
$0 \le k < n$, with $k = 0$ giving the root closest to 1),
computed to $p$-bit precision.
It also sets $w$ to the corresponding quadrature weight.
We use the formulae in \cite[Theorem~1(c)]{petras1999computation} to
compute an initial enclosure with roughly machine precision,
followed by refinements with the interval Newton method
at doubling precision steps for very high precision.
We deduce the quadrature weights thanks to the classical expression as
functions of the nodes recalled in equation~\cref{eq:glquad}.

\Cref{tab:timings} shows timings for computing
degree-$n$ Gauss-Legendre rules to $p$-bit precision
by calling this function repeatedly with $0 \le k < n / 2$.
\Cref{tab:parispeedup} compares our code
to the \texttt{intnumgaussinit} function in Pari/GP
which uses a generic polynomial root isolation strategy
followed by Newton iteration for high precision refinement.
The improvement is most dramatic for small~$p$ and large~$n$ where
we benefit from using asymptotic expansions, but we also obtain a
consistent speedup for large $p$.

For low precision and large $n$, our implementation
is about three orders of magnitude slower than the machine precision
code by Bogaert \cite{bogaert2014iteration}
which is reported to compute the nodes and weights for $n = 10^6$
in 0.02 seconds on four cores.
This difference is reasonable since we use arbitrary-precision arithmetic,
compute rigorous error bounds, and evaluate the Legendre polynomials
explicitly whereas Bogaert uses a more sophisticated
asymptotic development for both the nodes and the weights.

We also note that we can compute 53-bit floating-point values
with provably correct rounding in about the same time as the 64-bit
values, using Ziv's strategy of increasing the precision.
For a ball with relative radius just larger than $2^{-64}$, there
is less than a $1\%$ probability that the correct 53-bit rounding
cannot be determined, in which case that particular node
can be recomputed with a few more bits.

Fousse~\cite{fousse2007accurate} reports a few timings for smaller $n$ and high precision
obtained on a 2.40 GHz AMD Opteron 250 CPU.
For example, $n = 80, p = 500$ takes 0.14 seconds (our implementation
takes 0.029 seconds) and $n = 556, p = 5\,000$ takes 17 seconds
(our implementation takes 0.53 seconds).
Of course, these timings are not directly comparable since different
CPUs were used.

The \texttt{mathinit} program included with version 2.2.19
of D.\ H.\ Bailey's ARPREC library
generates Gauss-Legendre quadrature nodes
using Newton iteration together with the three-term recurrence for evaluating
Legendre polynomials~\cite{bailey2002arprec,bailey2011high}.
With default parameters, this program computes the rules of degree $n = 3 \cdot 2^{i+1}$
for $1 \le i \le 10$ at 3\,408 bits of precision, intended as
a precomputation for performing
degree-adaptive numerical integrations with up to 1\,000 decimal digit accuracy.
This takes about 1\,300 seconds in total (our implementation takes 32
seconds).
A breakdown for each degree level
is shown in \cref{tab:arprectimings}.

\Cref{tab:arprectimings} also shows the approximation error and the
evaluation time (not counting the computation of the nodes and weights)
for the degree-$n$ approximations of three different integrals,
illustrating the relative costs and realistic requirements for $n$.
As motivation for the third integral, we might think
of a segment of a Mellin-Barnes integral.
The log, Airy and gamma function implementations in Arb are used.

\begin{table}[t!]
\caption{Left columns: time in seconds to generate 1\,000-digit
quadrature rules for the degrees~$n$ used by ARPREC.
Right columns: for three different integrals, the
error $|\int_{-1}^1 f(x) \dx - \sum_{k=0}^{n-1} w_k f(x_k)|$ of the degree-$n$
quadrature rule, and the time to evaluate this degree-$n$ approximation
of the integral at 1\,000-digit precision in Arb given the nodes and weights $(x_k, w_k)$.}
\begin{center}
\begin{tabular}{ r | c c | c c| c c | c c }
$n$ & ARPREC\! & Our code &
    \multicolumn{2}{|c|}{$\int_{-1}^{1}\!\log(2\!+\!x) \dx$} &
    \multicolumn{2}{|c|}{$\int_{-1}^{1}\!\operatorname{Ai}(10 x) \dx$} &
    \multicolumn{2}{|c}{$\int_{-1}^{1}\!\Gamma(1\!+\!ix) \dx$} \\
   &         &          & Error   & Time      &  Error & Time  &  Error & Time \\ \hline
\rule{0pt}{3ex}12 & 0.00520 & 0.000592 & $10^{-14}$ &       &   $10^{-1}$ &  &  $10^{-8}$ & \\
24 & 0.0189 & 0.00171 & $10^{-28}$  &       &  $10^{-9}$  & &  $10^{-17}$ & \\
48 & 0.0629 & 0.00507 & $10^{-56}$ &        &  $10^{-34}$  &    &  $10^{-36}$ & \\
96 & 0.251 & 0.0163 & $10^{-111}$  &         &  $10^{-105}$ &          &  $10^{-73}$ & \\
192 & 0.974 & 0.0532 & $10^{-222}$ &         &  $10^{-284}$ &    0.075      &   $10^{-146}$ & \\
384 & 3.83 & 0.195 & $10^{-441}$  &  0.023        & $10^{-721}$ & 0.15           &   $10^{-293}$ & 1.3 \\
768 & 15.2 & 0.763 & $10^{-881}$ & 0.045      & $<\varepsilon$ & 0.29           &   $10^{-588}$ & 2.5 \\
1\,536 & 60.9 & 2.82 & $<\varepsilon$ &  0.091     &                &    &            $<\varepsilon$ & 5.0 \\
3\,072 & 241 & 9.55 &  &  &  &  &  &  \\
6\,144 & 1\,013 & 18.3 &  &  &  &  &  &  \\
\end{tabular}
\label{tab:arprectimings}
\end{center}
\end{table}

\begin{table}[t!]
\caption{Left columns: step sizes $h$, number of evaluation
points, and time to compute nodes for double exponential quadrature with Arb
at 1\,000-digit precision. Right columns: error and evaluation
time given precomputed nodes.}
\begin{center}
\begin{tabular}{ c c c | c c | c c | c c }
$h$ & $2n+1$ & Time &
    \multicolumn{2}{|c|}{$\int_{-1}^{1}\!\log(2\!+\!x) \dx$} &
    \multicolumn{2}{|c|}{$\int_{-1}^{1}\!\operatorname{Ai}(10 x) \dx$} &
    \multicolumn{2}{|c}{$\int_{-1}^{1}\!\Gamma(1\!+\!ix) \dx$} \\
   &         &          & Error   & Time      &  Error & Time  &  Error & Time \\ \hline
\rule{0pt}{3ex}$2^{-7}$ & 1\,989 & $0.07$ & $10^{-407}$ & 0.12 & $10^{-423}$ & 0.93 & $10^{-314}$ & 6.3 \\
$2^{-8}$ & 3\,977 & $0.14$ & $10^{-814}$ & 0.25 & $10^{-909}$ & 1.75 & $10^{-630}$ & 13.0 \\
$2^{-9}$ & 7\,955 & $0.27$ & $<\varepsilon$ & 0.55 & $<\varepsilon$ & 3.49 & $<\varepsilon$ & 25.1
\end{tabular}
\label{tab:dequad}
\end{center}
\end{table}

The last few degree levels (with $n$ roughly larger than
the number of decimal digits) used by ARPREC
tend to be dispensable for well-behaved integrands.
A larger $n$ is needed if the path of integration
is close to a singularity or if the integrand is highly oscillatory.
In such cases, bisecting the interval a few times
to reduce the necessary $n$ is often a better tradeoff.
On the other hand, since the time to generate nodes with our code only
grows linearly with~$n$ beyond $n \approx p$,
increasing the degree further is viable,
and potentially useful if the integrand is expensive to evaluate.

In the present work, we refrain from a more detailed discussion of
adaptive integration strategies and the computation
of error bounds for the integral itself.
However, we mention that Arb contains an implementation of a
version of the Petras algorithm~\cite{petras2002self} for rigorous
integration. This code uses both adaptive path subdivision
and Gauss-Legendre quadrature with an adaptive choice of $n$ up to
$n \approx 0.5p$ by default, with degree increments
$n \approx 2^{k/2}$ and automatic caching of the nodes
for fast repeated integrations.
Node generation takes at most a few seconds for a first integration
at 1\,000-digit precision
and a few milliseconds for 100-digit precision.

\section{Gauss-Legendre versus Clenshaw-Curtis and the double exponential method}

\label{sec:vsothers}

The Clenshaw-Curtis and double exponential (tanh-sinh) quadrature schemes
have received much attention as alternatives to Gauss-Legendre
quadrature for numerical integration with very high precision~\cite{takahasi1974double,bailey2011high,trefethen2008gauss}.
Both schemes
typically require a constant factor more evaluation points than Gauss-Legendre rules
for equivalent accuracy, but the nodes and weights are easier to compute.
Gauss-Legendre quadrature is therefore the best choice
when the integrand is expensive to evaluate or when nodes can
be precomputed for several integrations.
It is of some interest to compare the relative costs empirically.

Here we assume an analytic integrand with singularities well isolated from
the finite path of integration so that Gauss-Legendre quadrature is a good
choice to begin with. As observed in~\cite{trefethen2008gauss},
Clenshaw-Curtis often converges with identical rate to Gauss-Legendre for
less well-behaved integrands,
and the double exponential method is far superior to either
Clenshaw-Curtis or Gauss-Legendre for
analytic integrands with endpoint singularities.

Clenshaw-Curtis quadrature uses the Chebyshev nodes
$\cos(\pi k / n), 0 \le k \le n$, and the corresponding weights can be expressed
by a discrete cosine transform which takes $\OO(n \log n)$
arithmetic operations to compute by an FFT.
As a rule of thumb, $2n$-point Clenshaw-Curtis quadrature
gives the same accuracy as $n$-point Gauss-Legendre quadrature
(for instance, the 384-point Clenshaw-Curtis rule gives
errors of $10^{-229}$, $10^{-294}$ and $10^{-154}$ for the three
integrals in \cref{tab:arprectimings}).
As a point of comparison with \cref{tab:timings,tab:arprectimings},
Arb computes a length-2\,048 FFT with 1\,000-digit precision
in 0.09 seconds and a length-32\,768 FFT with 10\,000-digit precision
in 36 seconds.
The precomputation for Clenshaw-Curtis quadrature is therefore
roughly a factor 20 cheaper than for Gauss-Legendre quadrature
with our algorithm, while subsequent integration with
cached weights is twice as expensive for Clenshaw-Curtis.

Double exponential quadrature uses the change of variables
$x = \tanh(\tfrac12 \pi \sinh t)$ to convert an integral on $(-1,1)$
to the interval $(-\infty,+\infty)$ in such a way that
the trapezoidal rule $\int_{-\infty}^{\infty} f(t) dt \approx h \sum_{k=-n}^{n} f(hk)$
converges exponentially fast.
One generally chooses the discretization parameter as $h = 2^{-j}$ so that
both the evaluation points and weights can be recycled for successive levels
$j = 1, 2, 3\ldots$, and $n$ is chosen so that the tail of the
infinite series is smaller than $2^{-p}$.
The $2n+1$ nodes and weights can be computed with $n + \OO(1)$
exponential function evaluations and $\OO(n)$ arithmetic operations.
Double exponential quadrature with $Cn$ evaluation points
typically achieves the same accuracy as $n$-point Gauss-Legendre quadrature,
where $C$ is slightly larger than
for Clenshaw-Curtis, e.g.\ $C \approx 5$; see \cref{tab:dequad}.
The time to compute nodes and weights is comparable
to Clenshaw-Curtis quadrature (around 0.2 seconds for 1\,000-digit
precision and two minutes for 10\,000-digit precision).

In summary, for integration
with precision in the neighborhood of $10^3$ to $10^4$ digits,
computing $n$ nodes with our algorithm is about an
order of magnitude more expensive than performing $n$ elementary function (e.g.\ exp or log)
evaluations or computing an FFT of length~$n$.
This makes Gauss-Legendre quadrature competitive for computing more than $m$
integrals (assuming that nodes and weights are cached),
for a single integral requiring splitting into $m$ subintervals,
or for a single integral when the integrand costs more than $m$ elementary
function evaluations, where $m \approx 10^1$.

The picture becomes more complicated when accounting for the method
used to estimate errors in an adaptive integration algorithm.
One drawback of the Gauss-Legendre scheme is that the nodes
are not nested, so that an adaptive strategy that
repeatedly doubles the quadrature degree
requires twice as many function evaluations as
the degree of the final level.
This drawback disappears if the
error is estimated by extrapolation or if an error bound is computed
a priori as in the Petras algorithm~\cite{petras2002self}.

\section{Conclusion}

In \cite{bailey2011high}, it was claimed that
``There is no known scheme for generating Gaussian abscissa--weight pairs
that avoids [the] quadratic dependence
on~$n$. High-precision abscissas
and weights, once computed, may be stored for future use. But for truly
extreme-precision calculations -- i.e., several thousand digits or
more -- the cost of computing them
even once becomes prohibitive''.

In this quote, ``quadratic dependence'' refers to the number of arithmetic operations.
We may remark that using asymptotic expansions in the
evaluation of Legendre polynomials avoids the quadratic
dependence on~$n$ for fixed precision $p$, and \cref{thm:complexity}
avoids the implied cubic time dependence on $n$ when $n = \OO(p)$.
In fact, \cref{thm:complexity} implies that
Gauss-Legendre, Clenshaw-Curtis and double exponential
quadrature have the same quasi-optimal asymptotic bit complexity,
up to logarithmic factors.

Our experiments show that the algorithm in \cref{thm:complexity}
hardly is worthwhile. However, the hybrid method described in
\crefrange{sec:general}{sec:eval} does achieve a
significant speedup for practical $p$ and $n$
which allows us to compute Gauss-Legendre quadrature rules for
1\,000-digit integration in 1-2 seconds and for 10\,000-digit integration in 10-20 minutes
on a single core. This is not prohibitively
expensive compared to the repeated evaluation
of typical integrands, especially if several integrations are needed.
Parallelization is also trivial since all roots
are computed independently.

A natural extension of this work would be to
consider Gaussian quadrature rules for different
weight functions.
The techniques should transfer to other
classical orthogonal polynomials (Jacobi, Hermite, Laguerre, etc.)
which likewise have hypergeometric expansions
and satisfy three-term recurrence relations.
The main obstacle might be
to obtain large-$n$ asymptotic expansions with suitable error bounds.

\section*{Acknowledgements}

We are indebted to Nick Trefethen and two anonymous referees for their
useful comments. We also thank Bruno Salvy for pointing out Petras'
paper~\cite{petras1999computation}.
Marc Mezzarobba was supported in part by ANR grant ANR-14-CE25-0018-01
(Fast\-Relax).


\end{document}